\newtheorem{theorem}{Theorem}[section]
\newtheorem{proposition}{Proposition}[section]
\newtheorem{corollary}{Corollary}[section]
\newtheorem{remark}{Remark}[section]
\theoremstyle{definition}
\newtheorem{definition}{Definition}[section]
\theoremstyle{remark}
\newtheorem*{acknowledgment}{Acknowledgment}
\title{A Koll\'{a}r-type vanishing theorem}
\author{Jingcao Wu}
\begin{document}

\begin{abstract}
Let $f:X\rightarrow Y$ be a smooth fibration between two complex manifolds $X$ and $Y$, and let $L$ be a pseudo-effective line bundle on $X$. We obtain a sufficient condition for $R^{q}f_{\ast}(K_{X/Y}\otimes L)$ to be reflexive and hence derive a Koll\'{a}r-type vanishing theorem.
\end{abstract}
\maketitle
\footnotetext{2010 Mathematics Subject Classification. Primary 32J25; Secondary 32L05.}
\pagestyle{plain}

\section{Introduction}
Let $f:X\rightarrow Y$ be a fibration between two complex manifolds $X$ and $Y$. Koll\'{a}r proved a vanishing theorem of the associated higher direct images $R^{q}f_{\ast}(K_{X/Y})$ in his remarkable paper \cite{Ko86a}.
\begin{theorem}(Koll\'{a}r, \cite{Ko86a})\label{t1}
Let $f:X\rightarrow Y$ be a surjective map between a projective manifold $X$ and a projective variety $Y$. If $A$ is an ample divisor on $Y$, then for any $i>0$ and $q\geqslant0$,
\begin{equation*}
 H^{i}(Y,R^{q}f_{\ast}(K_{X})\otimes\mathcal{O}_{Y}(A))=0.
\end{equation*}
\end{theorem}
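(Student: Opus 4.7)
The plan is to combine the Leray spectral sequence with a Hodge-theoretic splitting and a covering construction. By the projection formula,
\[
R^{q}f_{\ast}(K_{X})\otimes\mathcal{O}_{Y}(A)\cong R^{q}f_{\ast}(K_{X}\otimes f^{\ast}\mathcal{O}_{Y}(A)),
\]
so one is reduced to computing $H^{i}(Y,R^{q}f_{\ast}(K_{X}\otimes f^{\ast}A))$. The Leray spectral sequence
\[
E_{2}^{i,q}=H^{i}(Y,R^{q}f_{\ast}(K_{X}\otimes f^{\ast}A))\Longrightarrow H^{i+q}(X,K_{X}\otimes f^{\ast}A)
\]
is the main tool. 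If it degenerates at $E_{2}$---equivalently, if there is a splitting $Rf_{\ast}K_{X}\simeq\bigoplus_{q}R^{q}f_{\ast}K_{X}[-q]$ in the derived category of $Y$---then every $E_{2}^{i,q}$ appears as a direct summand of $H^{i+q}(X,K_{X}\otimes f^{\ast}A)$, so the task reduces to killing suitable cohomology of the twisted canonical bundle on $X$.

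For the degeneration, I would argue Hodge-theoretically. Over the open locus $Y_{0}\subset Y$ on which $f$ is smooth, Deligne's theorem for smooth proper morphisms yields the splitting of $Rf_{\ast}\mathbb{C}$, and $R^{q}f_{\ast}K_{X/Y_{0}}$ appears as the lowest nonzero piece of the associated Hodge filtration. The delicate point is to extend the splitting across the discriminant $Y\setminus Y_{0}$; I would follow Koll\'{a}r's original route, using relative duality and a careful local analysis of $f$ near its singular fibers, or alternatively invoke Saito's theory of polarizable pure Hodge modules, which packages the extension problem into a general formalism.

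For the vanishing of $H^{j}(X,K_{X}\otimes f^{\ast}A)$ in the relevant range, Kodaira or Kawamata--Viehweg does not apply directly because $f^{\ast}A$ is only semi-ample on $X$. I would remedy this by a cyclic covering trick: for $m\gg 0$ pick a general smooth $D\in|mA|$ such that $f^{\ast}D$ has simple normal crossings, and form the degree-$m$ cyclic cover $\pi:\widetilde{X}\to X$ branched along $f^{\ast}D$. A standard computation gives
\[
\pi_{\ast}K_{\widetilde{X}}\cong\bigoplus_{j=0}^{m-1}K_{X}\otimes f^{\ast}\mathcal{O}_{Y}(jA),
\]
exhibiting $K_{X}\otimes f^{\ast}A$ as a direct summand of $\pi_{\ast}K_{\widetilde{X}}$; applying the degeneration step to $\widetilde{f}=f\circ\pi$ together with Serre vanishing for a sufficiently large multiple of $A$ on $Y$ then propagates the desired vanishing back down. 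The main obstacle is the degeneration itself, since $f$ may have singular fibers and $Y$ may be singular; once that splitting is in place, everything else is bookkeeping with the projection formula and Koll\'{a}r's injectivity theorem.
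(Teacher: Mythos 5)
There is a genuine gap at the final ``propagation'' step, and it is exactly where the real content of Koll\'ar's theorem lives. (Note first that the paper does not prove Theorem \ref{t1} at all --- it is quoted from \cite{Ko86a}; the relevant comparison is with Koll\'ar's original argument, whose structure the paper reproduces when proving its generalization, Theorem \ref{t5}.) Your reduction via the splitting of $Rf_{\ast}K_{X}$ only shows that each $E_{2}^{i,q}=H^{i}(Y,R^{q}f_{\ast}K_{X}\otimes A)$ is a \emph{direct summand} of $H^{i+q}(X,K_{X}\otimes f^{\ast}A)$; but the abutment can never be killed, because it also contains the summand $H^{0}(Y,R^{i+q}f_{\ast}K_{X}\otimes A)$, which is typically nonzero (already for $i=1$, $q=0$ the total degree is $1$, far below the range where Kawamata--Viehweg for the nef class $f^{\ast}A$ of numerical dimension $\dim Y$ could help). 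The cyclic cover does not repair this: after passing to $\widetilde{f}=f\circ\pi$, degeneration only embeds $H^{i}(Y,R^{q}f_{\ast}K_{X}\otimes A)$ into the \emph{untwisted} group $H^{i+q}(\widetilde{X},K_{\widetilde{X}})$, which is a nonvanishing Hodge-theoretic invariant that in fact grows with $m$. Serre vanishing gives $H^{i}(Y,R^{q}f_{\ast}K_{X}\otimes A^{N})=0$ for $N\gg0$, but to descend from $A^{N}$ to $A$ you need the multiplication map $H^{i}(Y,R^{q}f_{\ast}K_{X}\otimes A)\rightarrow H^{i}(Y,R^{q}f_{\ast}K_{X}\otimes A^{N})$ to be \emph{injective}. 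That injectivity is the crux; your last sentence relegates ``Koll\'ar's injectivity theorem'' to bookkeeping, but without spelling out how it enters (and, on the splitting route, why the decomposition is compatible with these multiplication maps --- a nontrivial functoriality point), no vanishing follows from what you have assembled.

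For contrast, the actual argument --- Koll\'ar's in \cite{Ko86a}, mirrored by this paper's proof of Theorem \ref{t5} --- uses no Leray degeneration and no decomposition theorem. One inducts on dimension: choose $m\gg0$ with Serre vanishing, take a general $H\in|mA|$ with smooth pullback $H^{\prime}=f^{\ast}H$, and use the short exact sequence $0\rightarrow K_{X}\otimes f^{\ast}A\rightarrow K_{X}\otimes f^{\ast}(m+1)A\rightarrow K_{H^{\prime}}\otimes f^{\ast}A|_{H^{\prime}}\rightarrow0$. Torsion-freeness of the higher direct images (your missing second ingredient; in the paper it is Theorem \ref{t4}) splits the resulting long exact sequence of $R^{q}f_{\ast}$ into short exact sequences, and the inductive hypothesis plus Serre vanishing then kills $H^{i}$ for all $i>1$. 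The remaining case $i=1$ is handled precisely by the injectivity theorem: the composite $H^{1}(Y,R^{q}f_{\ast}(\cdot)\otimes A)\hookrightarrow H^{q+1}(X,K_{X}\otimes f^{\ast}A)\stackrel{\otimes s}{\rightarrow}H^{q+1}(X,K_{X}\otimes f^{\ast}(m+1)A)$ is injective and factors through $H^{1}(Y,R^{q}f_{\ast}(\cdot)\otimes A^{m+1})=0$. So your proposal has the right ingredients on the table (splitting/degeneration is a true theorem of Koll\'ar's second paper, the covering formula $\pi_{\ast}K_{\widetilde{X}}\cong\bigoplus_{j}K_{X}\otimes f^{\ast}\mathcal{O}_{Y}(jA)$ is correct), but the logical engine --- injectivity of multiplication maps combined with torsion-freeness and induction --- is absent, and the step you propose in its place fails.
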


It is meaningful to consider the similar properties of the adjoint bundle $K_{X}\otimes L$ for a line bundle $L$ on $X$. When $L$ is endowed with a smooth (semi-)positive Hermitian metric, it was shown in \cite{MT08,Tk95} that the Koll\'{a}r-type vanishing theorem also holds. So we are interested in the singular case in this paper. In fact, such a vanishing theorem was first developed by Kawamata as follows.
\begin{theorem}(Kawamata, c.f. Theorem 2.86 in \cite{FEM13})\label{t2}
Let $f:X\rightarrow Y$ be a surjective map between a projective manifold $X$ and a projective variety $Y$. Assume that $L$ is a line bundle on $X$ which is numerically equivalent to a $\mathbb{Q}$-divisor $D$ with simple normal crossings and satisfying $\lfloor D\rfloor=0$. If $A$ is an ample divisor on $Y$, then for any $i>0$ and $q\geqslant0$,
\begin{equation*}
H^{i}(Y,R^{q}f_{\ast}(K_{X}\otimes L)\otimes\mathcal{O}_{Y}(A))=0.
\end{equation*}
\end{theorem}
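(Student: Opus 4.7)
The plan is to reduce Theorem \ref{t2} to the original Koll\'{a}r vanishing in Theorem \ref{t1} by means of the Kawamata covering trick. Write
\begin{equation*}
D=\sum_{i}\frac{a_{i}}{b}D_{i},\quad 0<a_{i}<b,
\end{equation*}
where the $D_{i}$ are smooth prime divisors meeting transversally and $b$ is a common denominator. The numerical equivalence $L\equiv D$ gives $bL\equiv\sum_{i}a_{i}D_{i}$, and after either a suitable finite \'{e}tale base change (to kill the torsion in $\mathrm{Pic}^{\tau}(X)/\mathrm{Pic}^{0}(X)$) or by absorbing a numerically trivial twist into a high power of the ample $A$, we may upgrade this to a genuine linear isomorphism $L^{\otimes b}\cong\mathcal{O}_{X}\bigl(\sum_{i}a_{i}D_{i}\bigr)$.

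Next, I would use this isomorphism to build the cyclic cover $\pi:\tilde{X}\to X$ of degree $b$ branched along $\sum D_{i}$ and resolve the resulting (toric) quotient singularities to obtain a smooth projective manifold $\mu:X'\to X$, and set $f':=f\circ\mu:X'\to Y$. The standard decomposition for such a cover reads
\begin{equation*}
\mu_{\ast}\mathcal{O}_{X'}=\bigoplus_{j=0}^{b-1}L^{-j}\otimes\mathcal{O}_{X}\Bigl(\sum_{i}\lfloor ja_{i}/b\rfloor D_{i}\Bigr),
\end{equation*}
and dualising (together with $\lfloor D\rfloor=0$, which ensures the $j=1$ summand is exactly $K_{X}\otimes L$) exhibits $K_{X}\otimes L$ as a direct summand of $\mu_{\ast}K_{X'}$. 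Hence $R^{q}f_{\ast}(K_{X}\otimes L)$ is a direct summand of $R^{q}f'_{\ast}K_{X'}$.

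Applying Theorem \ref{t1} to the surjective map $f':X'\to Y$ with the same ample divisor $A$ then produces
\begin{equation*}
H^{i}(Y,R^{q}f'_{\ast}(K_{X'})\otimes\mathcal{O}_{Y}(A))=0\quad\text{for all }i>0,\ q\geqslant 0,
\end{equation*}
and passing to the direct summand yields the desired vanishing. If a base change $Y'\to Y$ was used in the first step, I would descend the vanishing by noting that an ample pullback remains ample and that the functor $H^{i}(Y,-\otimes\mathcal{O}_{Y}(A))$ is compatible with finite pushforward.

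The step I expect to be the main obstacle is the passage from numerical equivalence to linear equivalence that is required before the cyclic cover can even be formed: one must produce an honest $b$-th root of $\mathcal{O}_{X}\bigl(\sum a_{i}D_{i}\bigr)\otimes L^{-b}$, which is only numerically trivial a priori. One must then verify that the resolution of the cyclic cover preserves the direct-summand decomposition above the generic point of $Y$, so that the summand $K_{X}\otimes L$ of $\mu_{\ast}K_{X'}$ is unaffected by the singularities introduced by the branch locus; this is the technical crux of the Kawamata argument and will require a careful local analysis of the normal crossing divisor $\sum D_{i}$.
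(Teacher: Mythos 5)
Your covering-trick reduction to Theorem \ref{t1} is a genuinely different route from the paper, which in fact never proves Theorem \ref{t2} at all: there it is quoted from Kawamata \cite{FEM13} and recovered only a posteriori as a special case of the analytic Theorem \ref{t5} (Gongyo--Matsumura injectivity plus reflexivity of the higher direct images plus induction on hyperplane sections). Your approach is the classical algebraic one, but as written it has a genuine gap, located exactly at the step you flagged, and neither of your two proposed fixes closes it. A finite \'{e}tale cover of $X$ (note: the cover must be of $X$, not a base change $Y'\rightarrow Y$ as you say at the end --- the obstruction lives in $\mathrm{Pic}(X)$) kills only the torsion of $\mathrm{Pic}^{\tau}(X)/\mathrm{Pic}^{0}(X)$. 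The residual numerically trivial bundle $N=L^{\otimes b}\otimes\mathcal{O}_{X}\bigl(-\sum_{i}a_{i}D_{i}\bigr)$ then lies in $\mathrm{Pic}^{0}(X)$, which is divisible, so you may choose $M$ with $M^{\otimes b}\cong N$; but the bundle admitting the honest $b$-th root relation is then $L':=L\otimes M^{-1}$, not $L$. The cyclic cover consequently exhibits $K_{X}\otimes L'$ (your $j=1$ summand, using $\lfloor D\rfloor=0$) as a direct summand of $\mu_{\ast}K_{X'}$, and Theorem \ref{t1} yields the vanishing for $R^{q}f_{\ast}(K_{X}\otimes L')$ --- a bundle merely numerically equivalent to the one in the statement. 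No further cover can remove $M$: for a finite surjection $\pi$ of degree $d$ the norm map gives $\mathrm{Nm}(\pi^{\ast}M)\cong M^{d}$, so $\pi^{\ast}M\cong\mathcal{O}$ forces $M$ to be torsion; a non-torsion element of $\mathrm{Pic}^{0}$ survives every finite cover. Your alternative, ``absorbing a numerically trivial twist into a high power of the ample $A$,'' is not available either: $A$ lives on $Y$ and is fixed by the statement, while $M$ lives on $X$ and is not a pullback from $Y$, so it cannot be traded against anything downstairs.

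The standard repair is to strengthen the input vanishing rather than the covering: since $M$ is numerically trivial, $c_{1}(M)=0$ in $H^{2}(X,\mathbb{R})$, so $M$ (hence $\mu^{\ast}M$) carries a smooth Hermitian metric with identically zero curvature, and one applies on the cover the Koll\'{a}r-type vanishing for $K\otimes F$ with $F$ smooth semi-positive --- precisely the results of \cite{Tk95,MT08} that the paper cites in its introduction --- to $K_{X'}\otimes\mu^{\ast}M$; the projection formula then makes $K_{X}\otimes L'\otimes M=K_{X}\otimes L$ a direct summand of $\mu_{\ast}(K_{X'}\otimes\mu^{\ast}M)$. (Alternatively one can run the Esnault--Viehweg machinery for unitary local systems \cite{EV92}.) With plain Theorem \ref{t1} alone the argument cannot close. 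The rest of your outline is standard and correct: the decomposition of $\mu_{\ast}\mathcal{O}_{X'}$, the identification of the $j=1$ summand via $\lfloor D\rfloor=0$, and the descent by the trace splitting are all fine, though note that the ``technical crux'' you defer at the end is in fact routine --- passing from $\pi_{\ast}\omega_{\tilde{X}}$ on the singular cyclic cover to the resolution $X'$ uses Grauert--Riemenschneider vanishing $R^{p}\mu_{\ast}K_{X'}=0$ for $p>0$, which collapses the Leray spectral sequence for $f\circ\mu$ and gives $R^{q}f'_{\ast}K_{X'}\cong R^{q}f_{\ast}(\mu_{\ast}K_{X'})$, so the summand is preserved everywhere, not just over the generic point of $Y$.
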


Similar results also appeared in \cite{Mat16,Ohs84}. In the proof of these Koll\'{a}r-type vanishing theorems, there are two key ingredients: the injectivity theorem and the torsion-freeness of the higher direct images. The injectivity theorem is due to Koll\'{a}r \cite{Ko86a}. It was generalized by Esnault and Viehweg \cite{EV92}, and was further strengthened by Ambro \cite{Am14}. Furthermore, there is an analytic version of the injectivity theorem in \cite{GoM17}, which is the start point of this paper.
\begin{theorem}(Gongyo--Matsumura, \cite{GoM17,Mat18})\label{t3}
Let $F$ and $L$ be two line bundles on a compact K\"{a}hler manifold $X$ with (singular) Hermitian metrics $h_{F}$ and $h_{L}$, respectively, of semi-positive curvature. Assume that there exists an $\mathbb{R}$-effective divisor $\Delta$ on $X$ such that $h_{F}=h^{a}_{L}h_{\Delta}$ for a positive real number $a$ and the singular metric $h_{\Delta}$ defined by $\Delta$. Then for a section $s$ of $L$ satisfying $\sup_{X}|s|_{h_{L}}<\infty$, the multiplication map induced by $s$
\begin{equation*}
   \Phi_{s}:H^{q}(X,K_{X}\otimes F\otimes\mathscr{I}(h_{F}))\stackrel{\otimes s}\rightarrow H^{q}(X,K_{X}\otimes F\otimes L\otimes\mathscr{I}(h_{F}h_{L}))
\end{equation*}
is injective for an integer $q$ with $0\leqslant q\leqslant \dim X$.
\end{theorem}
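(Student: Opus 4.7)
The plan is to reduce the injectivity statement to a twisted Bochner–Kodaira–Nakano argument carried out on a sequence of regularizations of the singular metrics. First, via the standard $L^{2}$–Dolbeault isomorphism for multiplier ideal sheaves on a compact K\"ahler manifold, I would identify
\begin{equation*}
H^{q}(X,K_{X}\otimes F\otimes\mathscr{I}(h_{F}))\cong \mathcal{H}^{n,q}_{(2)}(X\setminus Z,F;h_{F},\omega),
\end{equation*}
where $Z$ is the polar set of $h_{F}$, $\omega$ is a fixed K\"ahler form, and the right hand side denotes harmonic $(n,q)$–forms with values in $F$ that are $L^{2}$ with respect to $h_{F}$. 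The hypothesis $\sup_{X}|s|_{h_{L}}<\infty$ guarantees that multiplication by $s$ maps this $L^{2}$ space into the corresponding space computing the target cohomology, so $\Phi_{s}$ is realized analytically as multiplication of harmonic forms by the holomorphic section $s$.

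Next, I would apply Demailly's equisingular regularization to obtain smooth metrics $h_{F,\varepsilon}$ on $X\setminus Z_{\varepsilon}$ with $Z_{\varepsilon}\searrow Z$ proper analytic, satisfying $i\Theta_{h_{F,\varepsilon}}(F)\geqslant -\varepsilon\omega$ and $\mathscr{I}(h_{F,\varepsilon})=\mathscr{I}(h_{F})$, and similarly produce $h_{L,\varepsilon}$. The key structural input from the hypothesis $h_{F}=h^{a}_{L}h_{\Delta}$ is the inequality of curvature currents
\begin{equation*}
i\Theta_{h_{F}}(F)\geqslant a\cdot i\Theta_{h_{L}}(L)+[\Delta],
\end{equation*}
and by choosing the regularizations of $h_{F}$ and $h_{L}$ in a compatible way (smoothing $h_{\Delta}$ together with an analytic Zariski decomposition of $h_{L}$) this inequality survives, up to an $O(\varepsilon)$ error, for the smooth approximants.

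The core computation is then the twisted Bochner identity. Let $u$ be a harmonic representative of a class in $\ker\Phi_{s}$, so $su=\bar\partial w$ for some $w\in L^{2}(X,\Lambda^{n,q-1}\otimes F\otimes L;h_{F}h_{L})$. Replacing $u$ by its $\varepsilon$–harmonic representative $u_{\varepsilon}$ and using that $s$ is $\bar\partial$–closed and that $[\bar\partial^{*}_{h_{F}h_{L}},s]=0$ modulo curvature terms, one expands
\begin{equation*}
\|su_{\varepsilon}\|^{2}_{h_{F}h_{L}}=\langle su_{\varepsilon},\bar\partial w\rangle=\langle \bar\partial^{*}(su_{\varepsilon}),w\rangle,
\end{equation*}
and the curvature inequality above forces the right hand side to be $O(\varepsilon)$. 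Letting $\varepsilon\to 0$ using the boundedness of $|s|_{h_{L}}$ to control the limit, one obtains $\|su\|=0$, so $su\equiv 0$, and since $s$ is a nonzero holomorphic section on the connected manifold $X$ this forces $u=0$.

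The main obstacle is the simultaneous management of the singularities of $h_{F}$, $h_{L}$ and $\Delta$: all three metrics can have unbounded weights, and one must choose the approximating family so that (i) the multiplier ideals are preserved, (ii) the fundamental curvature inequality holds uniformly in $\varepsilon$, and (iii) the $L^{2}$–norms $\|u_{\varepsilon}\|_{h_{F,\varepsilon}}$ and $\|w\|_{h_{F,\varepsilon}h_{L,\varepsilon}}$ remain controlled so that weak limits exist. The section $s$ being bounded in $h_{L}$ is exactly what lets the final limiting estimate close; the rest is a careful bookkeeping of weights, of the type carried out by Gongyo–Matsumura and refined by Matsumura.
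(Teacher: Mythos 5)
First, a point of orientation: the paper itself contains no proof of Theorem \ref{t3}; the statement is imported from Gongyo--Matsumura \cite{GoM17} and Matsumura \cite{Mat18} and used as a black box in the proof of Theorem \ref{t5}, so your attempt can only be measured against the proof in those references. At the level of strategy you do track it: equisingular approximation $h_{F,\varepsilon}$ with $\mathscr{I}(h_{F,\varepsilon})=\mathscr{I}(h_{F})$ and curvature bounded below by $-\varepsilon\omega$, $\varepsilon$-harmonic representatives $u_{\varepsilon}$, an Enoki-type Bochner estimate making $su_{\varepsilon}$ asymptotically harmonic, and a limit closed by $\sup_{X}|s|_{h_{L}}<\infty$. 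But two steps as you wrote them have genuine gaps. The first is your central estimate $\|su_{\varepsilon}\|^{2}=\langle su_{\varepsilon},\bar{\partial}w\rangle=\langle\bar{\partial}^{\ast}(su_{\varepsilon}),w\rangle=O(\varepsilon)$: here $w$ solves $su=\bar{\partial}w$ in the $L^{2}$-space of the singular metric $h_{F}h_{L}$, while $u_{\varepsilon}$ and $\bar{\partial}^{\ast}$ live in the $\varepsilon$-dependent space, so the pairing mixes different Hilbert spaces and is not defined as written. To run it you need, for each $\varepsilon$, a solution $w_{\varepsilon}$ of $su_{\varepsilon}=\bar{\partial}w_{\varepsilon}$ with $\sup_{\varepsilon}\|w_{\varepsilon}\|<\infty$; this uniform bound on minimal solutions across a family of varying $L^{2}$-spaces is the single hardest point of \cite{Mat18} and cannot be filed under ``bookkeeping''. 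Likewise $[\bar{\partial}^{\ast},s]=0$ ``modulo curvature terms'' is not an identity available to you: multiplication by $s$ commutes with $\bar{\partial}$ but not with $\bar{\partial}^{\ast}$; what actually yields $\|\bar{\partial}^{\ast}(su_{\varepsilon})\|^{2}\leqslant C\varepsilon\|u_{\varepsilon}\|^{2}$ is Nakano's inequality applied to the harmonic $u_{\varepsilon}$, with the hypothesis $h_{F}=h_{L}^{a}h_{\Delta}$ entering exactly to compare the curvature operators of $h_{F,\varepsilon}$ and $h_{F,\varepsilon}h_{L}$.

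The second gap is the endgame. Your opening identification of $H^{q}(X,K_{X}\otimes F\otimes\mathscr{I}(h_{F}))$ with a space of $h_{F}$-harmonic forms is not available: for the singular metric itself there is no Hodge decomposition (that is the whole reason for the $\varepsilon$-machinery), only the quotient $\mathrm{Ker}\,\bar{\partial}/\mathrm{Im}\,\bar{\partial}$, and even closedness of $\mathrm{Im}\,\bar{\partial}$ needs the compactification and \v{C}ech comparison argument (Claim 1 of \cite{Fuj12}). Consequently ``$\|su\|=0$, so $su\equiv0$, so $u=0$'' concludes a statement about a form, not about the cohomology class. What the limiting argument actually gives is that the weak limit $u_{0}$ of the $u_{\varepsilon}$ vanishes almost everywhere (since $su_{0}=0$ and $s\neq0$ off a divisor); from $u_{\varepsilon}\rightharpoonup0$ one must still deduce that the original representative lies in $\mathrm{Im}\,\bar{\partial}$ of a fixed space. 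That requires the orthogonality argument in a reference space for some fixed $h_{\varepsilon_{0}}$ plus the de Rham--Weil/\v{C}ech comparison using $\mathscr{I}(h_{F})=\mathscr{I}(h_{F,\varepsilon_{0}})$ --- precisely the commutative-diagram step that this paper spells out in the injectivity part of Proposition \ref{p1}. Without these two ingredients, your sketch reproduces the skeleton of the Gongyo--Matsumura argument but leaves its two load-bearing estimates unproved.
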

Here and in the rest of this paper, we use $\mathscr{I}(h)$ to denote the multiplier ideal sheaf \cite{Dem12} associated with a (singular) metric $h$.

One may ask whether $R^{q}f_{\ast}(K_{X/Y}\otimes L)$ is torsion-free when $L$ is pseudo-effective. In general, we cannot get the desired result because the singular part of $L$ after being pushed forward to $Y$ cannot be controlled. However, when the singularity of $L$ is sufficiently mild, we can establish the following theorem.
\begin{theorem}\label{t4}
Let $(L,h)$ be a pseudo-effective line bundle on a complex manifold $X$ so that there exists a section $s$ of some positive multiple $mL$ satisfying $\sup_{X}|s|_{h^{m}}<\infty$. Assume that $Y$ is a complex manifold, and $f: X\rightarrow Y$ is a smooth, K\"{a}hler fibration with connected compact fibres such that for any $y\in Y$, $h_{y}:=h|_{X_{y}}$ is well-defined and
\begin{equation*}
    H^{q}(X_{y},K_{X_{y}}\otimes L_{y})=H^{q}(X_{y},(K_{X_{y}}\otimes L_{y})\otimes\mathscr{I}(h_{y})).
\end{equation*}
Then $R^{q}f_{\ast}(K_{X/Y}\otimes L)$ is reflexive.
\end{theorem}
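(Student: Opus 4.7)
The plan is to verify reflexivity of $\mathscr{F}:=R^{q}f_{\ast}(K_{X/Y}\otimes L)$ by checking the two standard criteria on the smooth base $Y$: torsion-freeness and the Hartogs extension property across analytic subsets of codimension $\geq 2$. Both are local on $Y$, so I would fix $y_{0}\in Y$ and work in a Stein polydisc $V\ni y_{0}$. A preliminary reduction replaces $\mathscr{F}$ by its $L^{2}$-avatar $\mathscr{F}^{L^{2}}:=R^{q}f_{\ast}(K_{X/Y}\otimes L\otimes\mathscr{I}(h))$: the fiberwise equality $H^{q}(X_{y},K_{X_{y}}\otimes L_{y})=H^{q}(X_{y},(K_{X_{y}}\otimes L_{y})\otimes\mathscr{I}(h_{y}))$ from the assumption, combined with Grauert's base change theorem for the proper K\"{a}hler morphism $f$, shows that the natural map $\mathscr{F}^{L^{2}}\to\mathscr{F}$ induced by $\mathscr{I}(h)\hookrightarrow\mathcal{O}_{X}$ is an isomorphism. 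It therefore suffices to prove reflexivity of $\mathscr{F}^{L^{2}}$, which is accessible via $L^{2}$-Dolbeault theory with weight $h$ and admits the machinery of Theorem~\ref{t3}.

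For torsion-freeness, take a non-zero $g\in\mathcal{O}_{Y}(V)$ and suppose $[\alpha]\in H^{q}(f^{-1}(V),K_{X/Y}\otimes L\otimes\mathscr{I}(h))$ satisfies $f^{\ast}g\cdot[\alpha]=0$. Applying Theorem~\ref{t3} on each compact K\"{a}hler fiber $X_{y}$ with $F=L_{y}$ carrying $h_{y}$ and auxiliary bundle $mL_{y}$ carrying $h_{y}^{m}$ (the curvature decomposition $h_{y}=(h_{y}^{m})^{1/m}h_{\Delta}$ being satisfied trivially with $\Delta=0$, thanks to the bounded section $s_{y}$ of $mL_{y}$) yields injectivity of multiplication by $s$ on fiberwise cohomology. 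Combining this with $f^{\ast}g\cdot[\alpha]=0$ via the Leray spectral sequence for $f$ over the Stein base $V$ and the base-change identification above forces $[\alpha]=0$. For the Hartogs property, take an analytic $Z\subset V$ of codimension $\geq 2$. Since $f$ is smooth, $\tilde Z:=f^{-1}(Z)\subset f^{-1}(V)$ also has codimension $\geq 2$. A section $\sigma\in\mathscr{F}^{L^{2}}(V\setminus Z)$ is represented by a family of $L^{2}$-harmonic $K_{X_{y}}\otimes L_{y}$-valued forms varying holomorphically in $y\in V\setminus Z$, and a standard $L^{2}$-Riemann extension theorem across $\tilde Z$ (applicable because the harmonic representative is locally $L^{2}$ with weight $h$) produces an extension to $f^{-1}(V)$, hence to $V$.

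The main obstacle is adapting the compact-K\"{a}hler Theorem~\ref{t3} to the relative, non-compact setting over the Stein base $V$. The pulled-back function $f^{\ast}g$ is not a section of a pseudo-effective line bundle that directly fits the hypotheses of Theorem~\ref{t3} as stated, so the injectivity for multiplication by $f^{\ast}g$ on the global cohomology requires a fiberwise/exhaustion argument: one exhausts $V$ by relatively compact Stein subdomains, runs the harmonic integral method of Gongyo--Matsumura on each fiber, and glues using base change. A related technical subtlety is verifying that Grauert's theorem applies to $K_{X/Y}\otimes L\otimes\mathscr{I}(h)$; this rests on constancy of fiberwise cohomological dimensions, which is in turn guaranteed by the cohomological hypothesis of the theorem together with the properness and the K\"{a}hler condition on $f$. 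Once these two points are resolved, the reduction to $\mathscr{F}^{L^{2}}$ plus the classical torsion-free-plus-Hartogs criterion delivers the reflexivity of $\mathscr{F}$.
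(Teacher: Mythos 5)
There is a genuine gap, and it sits at both pillars of your plan. For torsion-freeness, Theorem~\ref{t3} gives injectivity of multiplication by a bounded section $s$ of (a multiple of) $L$, i.e.\ a map between \emph{two different} cohomology groups $H^{q}(K_{X}\otimes F\otimes\mathscr{I}(h_{F}))\to H^{q}(K_{X}\otimes F\otimes L\otimes\mathscr{I}(h_{F}h_{L}))$; it says nothing about multiplication by $f^{\ast}g$ with $g\in\mathcal{O}_{Y}(V)$, which is an endomorphism of the same group, and a torsion section of $R^{q}f_{\ast}$ lives over a proper analytic subset of $Y$, where fiberwise injectivity of $\otimes s$ gives no leverage at all. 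You acknowledge this mismatch yourself, but then defer it to an unspecified ``fiberwise/exhaustion argument'' --- that deferred step \emph{is} the mathematical content of torsion-freeness (in the smooth semipositive case it is Takegoshi's theorem, proved by a genuine relative harmonic theory, not by fiberwise injectivity plus Leray), so the proposal assumes what it must prove. The preliminary reduction is also unjustified: Grauert base change requires flatness and relates $R^{q}f_{\ast}(K_{X/Y}\otimes L\otimes\mathscr{I}(h))$ to the fiber cohomology of the \emph{restricted} sheaf $\mathscr{I}(h)|_{X_{y}}$, which in general differs from $\mathscr{I}(h_{y})$ (Ohsawa--Takegoshi restriction gives only one inclusion), and the hypothesis of Theorem~\ref{t4} is an equality of two cohomology groups on each fixed fiber --- it does not give constancy of $\dim H^{q}(X_{y},K_{X_{y}}\otimes L_{y})$ in $y$. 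Indeed, if you had constancy plus base change, the direct image would be locally free and there would be nothing left to prove.

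The Hartogs step has the same problem in a different guise: since $h$ is singular, there is no classical harmonic theory on the fibers (the paper states this explicitly at the start of Section~2), so ``a family of $L^{2}$-harmonic representatives varying holomorphically in $y$'' is unsubstantiated --- even in the smooth semipositive case, holomorphic dependence is precisely the nontrivial content of Takegoshi and Mourougane--Takayama, and it concerns the holomorphic $(n-q)$-forms $\ast\tilde{u}$, not the harmonic $(n,q)$-forms themselves; moreover, Riemann-type $L^{2}$ extension applies to holomorphic objects, not to degree-$q$ cohomology classes. The paper resolves both difficulties simultaneously and quite differently: Proposition~\ref{p1} builds, via Demailly approximation $h_{\varepsilon}$, a complete K\"{a}hler metric on the complement of $Z=\{s=0\}$, harmonic representatives for each $\varepsilon$, the Hodge star, a weak $L^{2}$ limit, and extension of the limiting \emph{holomorphic} form across $Z$, a fiberwise injection $S^{q}_{y}$ into $H^{0}(X_{y},\Omega^{n-q}_{X_{y}}\otimes L_{y})$; Proposition~\ref{p3} splits it by $v\mapsto[\omega^{q}_{y}\wedge v]$; and then $R^{q}f_{\ast}(K_{X/Y}\otimes L)$ is a direct summand of the reflexive sheaf $f_{\ast}(\Omega^{n-q}_{X/Y}\otimes L)$ with torsion-free quotient, hence normal and reflexive by Corollary~1.7 of \cite{Har80}. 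Your torsion-free-plus-normal criterion is the same endgame the paper reaches, but both of your verifications rest on steps that are missing or would fail as stated; to repair the proposal you would essentially have to construct the split embedding that constitutes the paper's proof.
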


Combining the two theorems above, we then can prove the following Koll\'{a}r-type vanishing theorem.
\begin{theorem}\label{t5}
Let $f:X\rightarrow Y$ be a surjective fibration between two projective manifolds $X$ and $Y$. Let $(L,h)$ be a $\mathbb{Q}$-effective line bundle on $X$ so that $\mathscr{I}(h)=\mathcal{O}_{X}$. If $A$ is an ample divisor on $Y$, then for any $i>0$ and $q\geqslant0$,
\begin{equation*}
    H^{i}(Y,R^{q}f_{\ast}(K_{X}\otimes L)\otimes\mathcal{O}_{Y}(A))=0.
\end{equation*}
\end{theorem}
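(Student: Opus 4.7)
The plan follows Koll\'{a}r's classical strategy, using Theorem \ref{t3} for injectivity and Theorem \ref{t4} for torsion-freeness.

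\textbf{Preparing the metric.} Since $L$ is $\mathbb{Q}$-effective, fix $m_{0}\ge 1$ and a section $s_{0}\in H^{0}(X,m_{0}L)$ with zero divisor $D$; let $h$ be the singular metric defined by $s_{0}$, so that $h=h_{\Delta_{0}}$ with $\Delta_{0}:=\tfrac{1}{m_{0}}D$ and $|s_{0}|_{h^{m_{0}}}\equiv 1$. By hypothesis $\mathscr{I}(h)=\mathcal{O}_{X}$, and semicontinuity gives $\mathscr{I}(h_{y})=\mathcal{O}_{X_{y}}$ on a Zariski open subset of $Y$. Over the smooth locus of $f$ intersected with this open subset, Theorem \ref{t4} applies and yields reflexivity of $\mathcal{F}^{q}:=R^{q}f_{\ast}(K_{X}\otimes L)$; in particular $\mathcal{F}^{q}$ is torsion-free on $Y$.

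\textbf{Injectivity on $X$.} Fix a smooth positively-curved Hermitian metric $h_{A}$ on $A$. Choose $N\gg 0$ so that, by Serre vanishing, $H^{i}(Y,\mathcal{F}^{q}\otimes\mathcal{O}_{Y}(NA))=0$ for every $i>0$ and $q\ge 0$. For any $t\in H^{0}(Y,(N-1)A)$, apply Theorem \ref{t3} with $F:=L\otimes f^{\ast}A$ and $h_{F}:=h\cdot f^{\ast}h_{A}$, with its ``$L$'' equal to $f^{\ast}((N-1)A)$ and metric $(f^{\ast}h_{A})^{N-1}$, and with $a:=\tfrac{1}{N-1}$, $\Delta:=\Delta_{0}$, and $s:=f^{\ast}t$. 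The identity $h_{F}=((f^{\ast}h_{A})^{N-1})^{a}\cdot h_{\Delta_{0}}$ is tautological, $\sup_{X}|f^{\ast}t|_{(f^{\ast}h_{A})^{N-1}}=\sup_{Y}|t|_{h_{A}^{N-1}}<\infty$ by compactness of $Y$, and both multiplier ideals reduce to $\mathcal{O}_{X}$ because $\mathscr{I}(h)=\mathcal{O}_{X}$ and $h_{A}$ is smooth. Theorem \ref{t3} then yields an injection
\begin{equation*}
 H^{i}(X,K_{X}\otimes L\otimes f^{\ast}A)\hookrightarrow H^{i}(X,K_{X}\otimes L\otimes f^{\ast}NA)
\end{equation*}
for every $i\ge 0$, induced by multiplication by $f^{\ast}t$.

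\textbf{Descent to $Y$.} Serre vanishing collapses the Leray spectral sequence for the right-hand side above to its $p=0$ column, so that $H^{n}(X,K_{X}\otimes L\otimes f^{\ast}NA)\cong H^{0}(Y,\mathcal{F}^{n}\otimes NA)$. The torsion-freeness of $\mathcal{F}^{q}$, combined with Theorem \ref{t3} iterated on suitable modifications, should further yield a Koll\'{a}r-type derived-category decomposition $Rf_{\ast}(K_{X}\otimes L)\cong\bigoplus_{q}\mathcal{F}^{q}[-q]$ in $D^{b}(Y)$. Granted this, the Leray spectral sequence for $K_{X}\otimes L\otimes f^{\ast}A$ degenerates at $E_{2}$, multiplication by $f^{\ast}t$ acts summand-wise, and the injection above descends to $H^{i}(Y,\mathcal{F}^{q}\otimes A)\hookrightarrow H^{i}(Y,\mathcal{F}^{q}\otimes NA)=0$ for $i>0$, the desired vanishing. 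The main obstacle will be this derived-category decomposition: Koll\'{a}r's classical proof must be transferred to the singular-metric setting using Theorems \ref{t3} and \ref{t4} in place of their smooth analogues. A secondary subtlety is that Theorem \ref{t4} requires $f$ smooth globally, whereas our $f$ is smooth only on a Zariski open subset; this has to be absorbed by working with the torsion-free extension of the reflexive sheaf over the smooth locus.
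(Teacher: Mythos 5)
Your preparation of the metric and your application of Theorem \ref{t3} (with $F=L\otimes f^{\ast}A$, $a=\tfrac{1}{N-1}$, $\Delta=\Delta_{0}$, $s=f^{\ast}t$) are sound and match the role Theorem \ref{t3} plays in the paper. But your proof is not complete: the entire descent to $Y$ rests on the asserted decomposition $Rf_{\ast}(K_{X}\otimes L)\cong\bigoplus_{q}R^{q}f_{\ast}(K_{X}\otimes L)[-q]$, which you leave unproven (``should further yield''), and this is precisely the point the paper is structured to avoid --- its introduction notes that arguing this way forces one to prove a degeneration of the Leray spectral sequence ``which is usually non-trivial.'' The paper's actual proof takes a different route: induction on $n=\dim X$. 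One fixes $m\geqslant m_{0}$ with $mA$ very ample and Serre vanishing $H^{i}(Y,R^{q}f_{\ast}(K_{X}\otimes L)\otimes\mathcal{O}_{Y}(mA))=0$ for $i>0$, chooses a general $H\in|mA|$ with smooth pullback $H^{\prime}=f^{\ast}H$ avoiding the bad locus $Y-Z\cap W$, and uses the exact sequence $0\rightarrow K_{X}\otimes L\otimes A^{\prime}\rightarrow K_{X}\otimes L\otimes(m+1)A^{\prime}\rightarrow K_{H^{\prime}}\otimes L\otimes A^{\prime}|_{H^{\prime}}\rightarrow0$. Because $R^{q}f_{\ast}(K_{H^{\prime}}\otimes L\otimes A^{\prime}|_{H^{\prime}})$ is torsion (supported on $H$) while Theorem \ref{t4} makes $R^{q}f_{\ast}(K_{X}\otimes L\otimes A^{\prime})$ torsion-free near $H$, the long exact sequence of direct images splits into short exact sequences; the inductive hypothesis applied to the components of $H^{\prime}$ together with Serre vanishing kills $H^{i}$ for $i>1$, and the remaining case $i=1$ is settled by a commutative square comparing the Leray edge maps $B_{l}\rightarrow H^{q+1}(X,K_{X}\otimes L\otimes\mathcal{O}_{X}(lA^{\prime}))$ for $l=1$ and $l=m+1$, where Theorem \ref{t3} gives injectivity of the vertical multiplication map $\psi$ and $B_{m+1}=0$ forces $B_{1}=0$. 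Theorem \ref{t3} thus enters only in this one edge-map diagram; no global splitting of $Rf_{\ast}$ is ever needed, and supplying that splitting would be a substantial theorem in itself rather than a step you can defer.

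Two further inaccuracies. First, reflexivity of $\mathcal{F}^{q}$ over the open set where Theorem \ref{t4} applies does not imply, ``in particular,'' torsion-freeness of $\mathcal{F}^{q}$ on all of $Y$: torsion supported on the complement is invisible to the restriction. The paper never claims global torsion-freeness; it instead chooses $H$ disjoint from $Y-Z\cap W$, so torsion-freeness is invoked only where it is available, which is exactly what makes the connecting maps in the long exact sequence vanish. Second, even granting your derived-category decomposition, having multiplication by $f^{\ast}t$ act ``summand-wise'' is not automatic, since such splittings are not canonical; you would need compatibility of the splitting with the multiplication map, which is an additional argument. As it stands, your proposal correctly assembles the ingredients but is missing the key mechanism --- the hyperplane-section induction --- that converts the injectivity theorem into the vanishing statement without any spectral-sequence degeneration.
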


Obviously, Theorem \ref{t2} can be considered as a special case of Theorem \ref{t5}. However, it is difficult to derive Theorem \ref{t5} directly from Theorem \ref{t2}. Indeed, for a $\mathbb{Q}$-effective line bundle $L$ with $\mathscr{I}(L)=\mathcal{O}_{X}$, there exists a log-resolution $g:\Tilde{X}\rightarrow X$ such that the pullback of $L$ by $g$ is a $\mathbb{Q}$-divisor $D$ on $\Tilde{X}$ with simple normal crossings and satisfying $\lfloor D\rfloor=0$. Therefore if one would like to apply Theorem \ref{t2} to get Theorem \ref{t5}, it is inevitable to prove the degeneration of the Leray spectral sequence
\begin{equation*}
    R^{p}f_{\ast}R^{q}g_{\ast}(K_{Z}\otimes D)\Rightarrow R^{p+q}(f\circ g)_{\ast}(K_{Z}\otimes D),
\end{equation*}
which is usually non-trivial. One can see some of the difficulties in \cite{Tk95} where the smooth case is proved. Hence we use our method stated above to prove Theorem \ref{t5}.

We also remark that Theorem \ref{t5} can be used to prove the positivity of $R^{q}f_{\ast}(K_{X}\otimes L)$. Indeed, the canonical vanishing theorem says that for a nef vector bundle $E$ and an ample line bundle $A$ on $Y$,
\begin{equation*}
  H^{i}(Y,K_{Y}\otimes E\otimes A)=0.
\end{equation*}
Hence Theorem \ref{t5} implies that the higher direct images have the nef property in the sense of cohomology. In fact, we have the following result.
\begin{theorem}\label{t6}
Under the same assumptions as in Theorem \ref{t5}, if $A$ is an ample and globally generated line bundle and $A^{\prime}$ is a nef line bundle on $Y$, then the sheaf $R^{q}f_{\ast}(K_{X}\otimes L)\otimes A^{m}\otimes A^{\prime}$ is globally generated for any $q\geqslant0$ and $m\geqslant\dim Y+1$.
\end{theorem}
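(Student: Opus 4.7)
The natural strategy is to invoke Castelnuovo--Mumford regularity and reduce everything to the vanishing supplied by Theorem \ref{t5}. Set
\begin{equation*}
\mathcal{F}:=R^{q}f_{\ast}(K_{X}\otimes L)\otimes A^{m}\otimes A^{\prime},
\end{equation*}
and aim to verify that $\mathcal{F}$ is $0$-regular with respect to the globally generated ample line bundle $A$; that is, $H^{i}(Y,\mathcal{F}\otimes A^{-i})=0$ for every $i\geqslant1$. Once this is established, Mumford's regularity lemma immediately yields that $\mathcal{F}$ is generated by its global sections, which is exactly the conclusion sought.

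The verification of regularity proceeds as follows. For $i>\dim Y$ the group $H^{i}(Y,\mathcal{F}\otimes A^{-i})$ vanishes automatically, since $\mathcal{F}\otimes A^{-i}$ is a coherent sheaf on a variety of dimension $\dim Y$. For $1\leqslant i\leqslant\dim Y$, the relevant group rewrites as
\begin{equation*}
H^{i}(Y,R^{q}f_{\ast}(K_{X}\otimes L)\otimes (A^{m-i}\otimes A^{\prime})).
\end{equation*}
Because $A$ is ample and $A^{\prime}$ is nef, the line bundle $A^{m-i}\otimes A^{\prime}$ is ample as soon as $m-i\geqslant1$, and the hypothesis $m\geqslant\dim Y+1$ guarantees this inequality throughout the range $1\leqslant i\leqslant\dim Y$. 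One is then in a position to apply Theorem \ref{t5} with the ample divisor associated to $A^{m-i}\otimes A^{\prime}$, which kills the cohomology and completes the regularity check.

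There is essentially no technical obstacle beyond bookkeeping: the only ingredients are the elementary fact that the tensor product of an ample and a nef line bundle is ample, and the precise form of the Koll\'{a}r-type vanishing furnished by Theorem \ref{t5}. The numerical bound $m\geqslant\dim Y+1$ arises precisely because the regularity argument must tolerate twists by $A^{-i}$ up to $i=\dim Y$, at which point one still needs $A^{m-i}\otimes A^{\prime}$ to remain ample in order to invoke Theorem \ref{t5}. Thus the entire proof is a short application of Mumford's regularity theorem once the vanishing from Theorem \ref{t5} is in hand.
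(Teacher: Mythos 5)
Your proposal is correct and follows essentially the same route as the paper: apply Theorem \ref{t5} to the ample line bundle $A^{m-i}\otimes A^{\prime}$ (ample since $m-i\geqslant 1$ and $A^{\prime}$ is nef) to get $H^{i}(Y,R^{q}f_{\ast}(K_{X}\otimes L)\otimes A^{m-i}\otimes A^{\prime})=0$, conclude $0$-regularity with respect to $A$, and invoke Mumford's theorem. If anything, you are slightly more careful than the paper, which asserts the vanishing for all $i\geqslant1$ without separating out the range $i>\dim Y$ where $A^{m-i}\otimes A^{\prime}$ may fail to be ample and the vanishing instead holds for dimension reasons.
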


The paper is arranged as follows: In section 2 we first prove an embedding theorem, and then prove Theorem \ref{t4}. In section 3 we prove Theorems \ref{t5} and \ref{t6}.
\begin{acknowledgment}
The author sincerely thanks his supervisor Professor Jixiang Fu for discussions. Thanks also go to Shin-ichi Matsumura, who kindly provided some comment about the reference of this paper. Finally, he is very grateful to the referee for many useful suggestions on how to improve the paper.
\end{acknowledgment}

\section{Embedding Theorem}
Let $f:X\rightarrow Y$ be a smooth, K\"{a}hler fibration between two complex manifolds $X$ and $Y$, and let $L$ be a pseudo-effective line bundle on $X$. In order to prove that $R^{q}f_{\ast}(K_{X/Y}\otimes L)$ is reflexive, we will embed it into another reflexive sheaf so that the embedding is split. We first consider how to embed the cohomology group $H^{q}(X_{y},K_{X_{y}}\otimes L_{y})$ for any $y\in Y$ by the definition of the higher direct images.

If $L$ is semi-positive, it was done in \cite{MT08,Tk95}. In this case, since $f$ is K\"{a}hler, it gives a $(1,1)$-form $\omega_{f}$ on $X$ such that $\omega_{y}:=\omega_{f}|_{X_{y}}$ is a K\"{a}hler form on each fibre $X_{y}$. Hence, for a cohomology class $[u]\in H^{q}(X_{y},K_{X_{y}}\otimes L_{y})$, one can take the harmonic representative $\tilde{u}$ of the class $[u]$, and get an element $\ast\tilde{u}\in H^{0}(X_{y},\Omega^{n-q}_{X_{y}}\otimes L_{y})$. Here $\ast$ is the Hodge star operator defined by $\omega_{y}$. In other words, we have a map
\begin{equation*}
   S^{q}_{y}: R^{q}f_{\ast}(K_{X/Y}\otimes L)_{y}\rightarrow f_{\ast}(\Omega^{n-q}_{X/Y}\otimes L)_{y}
\end{equation*}
defined by $S^{q}_{y}([u])=\ast\tilde{u}$. On the other hand, there is a natural map
\begin{equation*}
   L^{q}_{y}:f_{\ast}(\Omega^{n-q}_{X/Y}\otimes L)_{y}\rightarrow R^{q}f_{\ast}(K_{X/Y}\otimes L)_{y}
\end{equation*}
defined by $L^{q}_{y}(v)=[v\wedge\omega^{q}_{y}]$. One can easily check that $L^{q}_{y}\circ S^{q}_{y}=id$. So the map $S^{q}_{y}$ is split.

Now assume that $L$ is merely semi-positive in the sense of current. So the classic harmonic theory cannot be used. In order to handle this case, we use Demailly's analytic approximation in \cite{DPS01}. Briefly, we approximate the original singular metric by a family of singular metrics which are smooth on a Zariski open set $W$. We use these metrics to define a family of maps for $y\in W$ like the $S^{q}_{y}$ above, and then take the limit to get the desired map. The existence of the limit is guaranteed by the assumption on the singularity of the line bundle $L$. More precisely, we have the following result.

\begin{proposition}\label{p1}
Let $(L,h)$ be a pseudo-effective line bundle on a compact K\"{a}hler manifold $X$. Assume that there exists a section $s$ of some positive multiple $mL$ satisfying $\sup_{X}|s|_{h^{m}}<\infty$. Then for any integer $q$ with $0\leqslant q\leqslant n$, there exists an injective morphism
\begin{equation*}
   S^{q}:H^{q}(X,K_{X}\otimes L\otimes\mathscr{I}(h))\rightarrow H^{0}(X,\Omega^{n-q}_{X}\otimes L).
\end{equation*}
\begin{proof}
Let $\omega$ be a K\"{a}hler form on $X$. By Theorem 2.2.1 in \cite{DPS01} we can approximate $h$ by a family of singular metrics $\{h_{\varepsilon}\}_{\varepsilon>0}$ with the following properties:

(a) $h_{\varepsilon}$ is smooth on $X-Z_{\varepsilon}$ for a subvariety $Z_{\varepsilon}$;

(b) $h_{\varepsilon_{2}}\leqslant h_{\varepsilon_{1}}\leqslant h$ holds for any $0<\varepsilon_{1}\leqslant\varepsilon_{2}$;

(c) $\mathscr{I}(h)=\mathscr{I}(h_{\varepsilon})$; and

(d) $i\Theta_{h_{\varepsilon}}(L)\geqslant-\varepsilon\omega$.

Thanks to the proof of the openness conjecture by Berndtsson \cite{Ber15}, one can arrange $h_{\varepsilon}$ with logarithmic poles along $Z_{\varepsilon}$ according to the remark in \cite{DPS01}. Moreover, since the norm $|s|_{h^{m}}$ is bounded on $X$, the set $\{x\in X|\nu(h_{\varepsilon},x)>0\}$ for every $\varepsilon>0$ is contained in the subvariety $Z:=\{x|s(x)=0\}$ by property (b). Here $\nu(h_{\varepsilon},x)$ refers to the Lelong number of $h_{\varepsilon}$ at $x$. Hence, instead of (a), we can assume that

(a') $h_{\varepsilon}$ is smooth on $X-Z$ and has logarithmic poles along $Z$, where $Z$ is a subvariety of $X$ independent of $\varepsilon$.

Now let $W=X-Z$. We can use the method in \cite{Dem82} to construct a complete K\"{a}hler metric on $W$ as follows. Take a quasi-psh function $\psi(\leqslant-e)$ on $X$ such that it is smooth on $W$ and has logarithmic pole along $Z$. Then $\Psi:=\log^{-1}(-\psi)$ is bounded on $X$. Define $\tilde{\omega}=\omega+\frac{1}{l}i\partial\bar{\partial}\Psi$ for some $l\gg0$. It is easy to verify that $\tilde{\omega}$ is a complete K\"{a}hler metric on $W$ and $\tilde{\omega}\geqslant\frac{1}{l}\omega$.

Let $L^{n,q}_{(2)}(W,L)_{h_{\varepsilon,\tilde{\omega}}}$ be the $L^{2}$-space of the $L$-valued $(n,q)$-forms $u$ on $W$ with respect to the inner product $\|\cdot\|_{h_{\varepsilon,\tilde{\omega}}}$ defined by
$$\|u\|^{2}_{h_{\varepsilon},\tilde{\omega}}=\int_{W}|u|^{2}_{h_{\varepsilon},\tilde{\omega}}dV_{\tilde{\omega}}.$$
Then we have the orthogonal decomposition
\begin{equation}\label{e1}
L^{n,q}_{(2)}(W,L)_{h_{\varepsilon},\tilde{\omega}}=\mathrm{Im}\bar{\partial}\bigoplus\mathcal{H}^{n,q}_{h_{\varepsilon}, \tilde{\omega}}(L)\bigoplus\mathrm{Im}\bar{\partial}^{\ast_{\tilde{\omega}}}_{h_{\varepsilon}}
\end{equation}
where
\begin{equation*}
    \mathcal{H}^{n,q}_{h_{\varepsilon}, \tilde{\omega}}(L)=\{u|\bar{\partial}u=0, \bar{\partial}^{\ast_{\tilde{\omega}}}_{h_{\varepsilon}}u=0\}.
\end{equation*}
We give some explanation for decomposition (\ref{e1}). Usually $\mathrm{Im}\bar{\partial}$ is not closed in the $L^{2}$-space of a noncompact manifold even if the metric is complete. However, in the situation we consider here, $W$ has the compactification $X$ and the forms are bounded in $L^{2}$-norm. In fact, by Claim 1 in \cite{Fuj12}, we have the isomorphism
\begin{equation}\label{e2}
H^{q}(X,K_{X}\otimes L\otimes\mathscr{I}(h_{\varepsilon}))\cong\frac{L^{n,q}_{(2)}(W,L)_{h_{\varepsilon,\widetilde{\omega}}}\cap\mathrm{Ker}\bar{\partial}}{\mathrm{Im}
\bar{\partial}}
\end{equation}
from which we can see that $\mathrm{Im}\bar{\partial}$ as well as $\mathrm{Im}\bar{\partial}^{\ast_{\tilde{\omega}}}_{h_{\varepsilon}}$ is closed. Hence decomposition (\ref{e1}) holds.

Isomorphism (\ref{e2}) was constructed in \cite{Fuj12} by the standard diagram chasing and $L^{2}$-extension technique. Such ideas more or less appeared in \cite{Siu84} and other related papers. Here we give the sketch of its proof.

Take a finite Stein cover $\mathcal{U}=\{U_{i}\}$ of $X$. By Cartan and Leray, we have the isomorphism
\begin{equation*}
   H^{q}(X,K_{X}\otimes L\otimes\mathscr{I}(h_{\varepsilon}))\cong\check{H}^{q}(\mathcal{U},K_{X}\otimes L\otimes\mathscr{I}(h_{\varepsilon})),
\end{equation*}
where the right hand side is the \v{C}ech cohomology group calculated by $\mathcal{U}$. By the standard diagram chasing, we have a homomorphism
\begin{equation*}
   \Bar{\alpha}:\check{H}^{q}(\mathcal{U},K_{X}\otimes L\otimes\mathscr{I}(h_{\varepsilon}))\rightarrow\frac{L^{n,q}_{(2)}(W,L)_{h_{\varepsilon,\tilde{\omega}}}\cap\mathrm{Ker}\bar{\partial}}{\mathrm{Im}\bar{\partial}}.
\end{equation*}
On the other hand, for $w\in L^{n,q}_{(2)}(W,L)_{h_{\varepsilon,\tilde{\omega}}}\cap\mathrm{Ker}\overline{\partial}$, we denote $w^{0}_{i_{0}}:=w|_{U_{i_{0}}\cap W}$ and solve the $\bar{\partial}$-equation $\bar{\partial}w^{1}_{i_{0}}=w^{0}_{i_{0}}$ on $U_{i_{0}}\cap W$ with certain $L^{2}$-estimate. Denote $w^{1}=\{w^{1}_{i_{0}}\}$. Since $\bar{\partial}(\delta w^{1})=0$, we obtain $w^{2}$ such that $\bar{\partial}w^{2}=\delta w^{1}$ on each $U_{i_{0}i_{1}}\cap W$. Here for convenience, we have put $U_{i_{0}...i_{q}}=U_{i_{0}}\cap\cdot\cdot\cdot\cap U_{i_{q}}$. By repeating this procedure, we obtain $w^{q}$ so that $\bar{\partial}w^{q}=\delta w^{q-1}$. Put $v:=\delta w^{q}=\{v_{i_{0}...i_{q}}\}$. Notice that $v_{i_{0}...i_{q}}$ is an $L$-valued $(n,0)$-form on $U_{i_{0}...i_{q}}\cap W$ with bounded $L^{2}$-norm and $\delta v=0$. Therefore we have a homomorphism
\begin{equation*}
   \Bar{\beta}:\frac{L^{n,q}_{(2)}(W,L)_{h_{\varepsilon,\tilde{\omega}}}\cap\mathrm{Ker}\bar{\partial}}{\mathrm{Im}
   \bar{\partial}}\rightarrow\check{H}^{q}(\mathcal{U},K_{X}\otimes L\otimes\mathscr{I}(h_{\varepsilon})).
\end{equation*}
It is not difficult to check that $\Bar{\alpha}$ and $\Bar{\beta}$ induce the desired isomorphism.

Now we define the map $S^{q}$. We use the de Rham--Weil isomorphism
\begin{equation*}
    H^{q}(X,K_{X}\otimes L\otimes\mathscr{I}(h))\cong\frac{\mathrm{Ker}\bar{\partial}\cap L^{n,q}_{(2)}(X,L)_{h,\omega}}{\mathrm{Im}\bar{\partial}}
\end{equation*}
to represent a given cohomology class by a $\bar{\partial}$-closed $L$-valued $(n,q)$-form $u$ with $\|u\|_{h,\omega}<\infty$. We denote $u|_{W}$ simply by $u_{W}$. Since $\tilde{\omega}\geqslant\frac{1}{l}\omega$, it is easy to verify that $|u_{W}|^{2}_{h_{\varepsilon},\tilde{\omega}}dV_{\tilde{\omega}}\leqslant C|u|^{2}_{h_{\varepsilon},\omega}dV_{\omega}$, which leads to the inequality $\|u_{W}\|_{h_{\varepsilon},\tilde{\omega}}\leqslant C\|u\|_{h_{\varepsilon,\omega}}$. Here $C$ is a constant used in a generic sense. Hence by property (b), we have
$\|u_{W}\|_{h_{\varepsilon},\tilde{\omega}}\leqslant C\|u\|_{h,\omega}$ which implies $u_{W}\in L^{n,q}_{(2)}(W,L)_{h_{\varepsilon},\tilde{\omega}}$. By decomposition (\ref{e1}), we have the harmonic representative $u_{\varepsilon}$ of $[u_{W}]$ in $\mathcal{H}^{n,q}_{h_{\varepsilon,\tilde{\omega}}}(L)$ and hence
\begin{equation*}
    \ast_{\tilde{\omega}}u_{\varepsilon}\in H^{0}(W,\Omega^{n-q}_{W}\otimes L\otimes\mathscr{I}(h_{\varepsilon})).
\end{equation*}
Since $\|\!\ast_{\tilde{\omega}}\! u_{\varepsilon}\|_{h_{\varepsilon,\tilde{\omega}}}=\|u_{\varepsilon}\|_{h_{\varepsilon},\tilde{\omega}}\leqslant C\|u\|_{h,\omega}$, there exists a subsequence of $u_{\varepsilon}$, which is still denoted by $u_{\varepsilon}$, and a current $v$ such that
\begin{equation*}
    \lim_{\varepsilon\rightarrow0}\ast_{\tilde{\omega}}u_{\varepsilon}=v\in L^{n-q,0}_{(2)}(W,L)_{h,\tilde{\omega}}
\end{equation*}
in the sense of the weak $L^{2}$-topology. Moreover, for any test form $w$ on $W$, we have
\begin{equation*}
\begin{split}
    (\bar\partial v,w)=(v,\bar\partial^{\ast}w)=\lim_{\varepsilon\to 0}(\ast u_{\varepsilon},\bar\partial^{\ast}w)=\lim_{\varepsilon\to 0}(\bar\partial(\ast u_\varepsilon), w)= 0
\end{split}
\end{equation*}
by the definition of the weak convergence. Here $\bar\partial^{\ast}$ is the formal adjoint operator. Hence $v$ is actually a holomorphic form on $W$ with $\|v\|_{h_{\varepsilon},\tilde{\omega}}\leqslant C\|u\|_{h,\omega}$. By the well-known extension theorem (such as Proposition 1.14 in \cite{Ohs02}), we can extend $v$ to the whole space $X$ to get an element, which is still denoted by $v$, in $H^{0}(X,\Omega^{n-q}_{X}\otimes L)$. (The weight functions here are singular, but since they are bounded above the extension result is still true.) We define $S^{q}([u])=v$.

In the following we prove that the map $S^{q}$ is injective. Assume $S^{q}([u])$ is zero. We need to prove the representative
\begin{equation*}
   u\in\mathrm{Ker}\bar{\partial}\cap L^{n,q}_{(2)}(X,L)_{h,\omega}
\end{equation*}
is also in $\mathrm{Im}\bar{\partial}$. Since $\|u_{W}\|_{h_{\varepsilon},\tilde{\omega}}\leqslant C\|u\|_{h,\omega}$, it follows that $u_{W}\in L^{n,q}_{(2)}(W,L)_{h_{\varepsilon},\tilde{\omega}}$ for every $\varepsilon>0$. Hence there exist $u_{\varepsilon}\in\mathcal{H}^{n,q}_{h_{\varepsilon},\tilde{\omega}}(L)$ and $w_{\varepsilon}\in\mathrm{Dom}\bar{\partial}\cap L^{n,q-1}_{(2)}(W,L)_{h_{\varepsilon},\tilde{\omega}}$ such that $u_{W}=\bar{\partial}w_{\varepsilon}+u_{\varepsilon}$.

Fix $\varepsilon_{0}>0$. For any $\varepsilon$ with $0<\varepsilon<\varepsilon_{0}$, by property (b) we have
\begin{equation*}
    \|u_{\varepsilon}\|_{h_{\varepsilon_{0}},\tilde{\omega}}\leqslant\|u_{\varepsilon}\|_{h_{\varepsilon},\tilde {\omega}}\leqslant C\|u\|_{h,\omega}.
\end{equation*}
Hence the $L^{2}$-norm of $u_{\varepsilon}$ with respect to the metrics $h_{\varepsilon_{0}}$ and $\tilde{\omega}$ is uniformly bounded. So there exists a subsequence of $u_{\varepsilon}$ and a $u_{0}\in L^{n,q}_{(2)}(W,L)_{h_{\varepsilon_{0}},\tilde{\omega}}$ such that $\lim_{\varepsilon\rightarrow0} u_{\varepsilon}=u_{0}$ in the sense of the weak $L^{2}$-topology. We claim that $u_{0}$ is zero. In fact, by the definition of $S^{q}$, we have $\lim_{\varepsilon\rightarrow0}\ast_{\tilde{\omega}}u_{\varepsilon}=S^{q}(u)=0$. Hence from the identity $\|\!\ast_{\tilde{\omega}}\! u_{\varepsilon}\|_{h_{\varepsilon_{0}},\tilde{\omega}}=\|u_{\varepsilon}\|_{h_{\varepsilon_{0}},\tilde{\omega}}$ we have $\lim_{\varepsilon\rightarrow0}\|u_{\varepsilon}\|_{h_{\varepsilon_{0}},\widetilde{\omega}}=0$ which implies $u_{0}=0$. We then claim that
\begin{equation}\label{e7}
    u_{W}\in \mathrm{Im}\bar{\partial}\cap L^{n,q}_{(2)}(W,L)_{h_{\varepsilon_{0}},\tilde{\omega}}.
\end{equation}
Indeed, since
\begin{equation*}
   u_{W}=\lim_{\varepsilon\rightarrow0}\bar{\partial}w_{\varepsilon}+\lim_{\varepsilon\rightarrow0} u_{\varepsilon}=\lim_{\varepsilon\rightarrow0}\bar{\partial}w_{\varepsilon},
\end{equation*}
for any $w=w_{1}+\bar{\partial}^{\ast_{\tilde{\omega}}}_{h_{\varepsilon_{0}}}w_{2}\in\mathcal{H}^{n,q}_{h_{\varepsilon_{0}},\tilde {\omega}}(L)\bigoplus \mathrm{Im}\bar{\partial}^{\ast_{\tilde{\omega}}}_{h_{\varepsilon_{0}}}$ we have
\begin{equation*}
    (u_{W},w)=\lim_{\varepsilon\rightarrow0}(\bar{\partial}w_{\varepsilon},w_{1}+\bar{\partial}^{\ast_{\tilde{\omega}}}_{h_{\varepsilon_{0}}}w_{2})=0.
\end{equation*}
Hence $u_{W}$ is orthogonal to the space $\mathcal{H}^{n,q}_{h_{\varepsilon_{0}},\tilde {\omega}}(L)\bigoplus \mathrm{Im}\bar{\partial}^{\ast_{\tilde{\omega}}}_{h_{\varepsilon_{0}}}$.

We now use (\ref{e7}) to prove $u\in \mathrm{Im}\bar{\partial}\cap L^{n,q}_{(2)}(X,L)_{h,\omega}$. In fact, we have the following commutative diagram:
\begin{equation*}
\xymatrix{
    \frac{\mathrm{Ker}\bar{\partial}\cap L^{n,q}_{(2)}(X,L)_{h,\omega}}{\mathrm{Im}\bar{\partial}}\ar[d]_{f_{1}} & \stackrel{j}{\longrightarrow} & \frac{\mathrm{Ker}\bar{\partial}\cap L^{n,q}_{(2)}(W,L)_{h_{\varepsilon_{0},\tilde{\omega}}}}{\mathrm{Im}\bar{\partial}} \ar[d]^{f_{2}} \\
    \check{H}^{q}(\mathcal{U},K_{X}\otimes L\otimes\mathscr{I}(h)) & =\!=\!= &\check{H}^{q}(\mathcal{U},K_{X}\otimes L\otimes\mathscr{I}(h_{\varepsilon_{0}}))
    }
\end{equation*}
Here $j$ is induced by the restriction from $L^{n,\cdot}_{(2)}(X,L)_{h,\omega}$ to $L^{n,\cdot}_{(2)}(W,L)_{h_{\varepsilon_{0}},\tilde{\omega}}$, and $f_{i},i=1,2,$ is the de Rham--Weil isomorphisim to the \v{C}ech cohomology group. (Here $f_{2}$ is just the $\bar{\beta}$ constructed before, and one can also consult \cite{Fuj12} for more details). The bottom equality is obtained from the property (c) that $\mathscr{I}(h_{\varepsilon_{0}})=\mathscr{I}(h)$. It follows that $u$ goes to zero through the map $j$. Therefore $u$ goes to zero through $f_{1}$.
\end{proof}
\end{proposition}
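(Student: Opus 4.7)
My plan is to construct $S^q$ by an approximation-plus-limit scheme: regularize $h$ by a family $h_\varepsilon$ with analytic singularities, perform $L^2$-Hodge theory on the Zariski open complement of a fixed analytic set, apply the Hodge star to a harmonic representative, take a weak $L^2$-limit as $\varepsilon\to 0$, and extend the resulting holomorphic form across the singular locus. The hypothesis $\sup_X |s|_{h^m} < \infty$ is what keeps every $h_\varepsilon$ singular only along $Z := \{s = 0\}$, a fixed analytic set independent of $\varepsilon$, and it is precisely what makes the extension step feasible.

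Concretely, I would first apply Demailly's regularization (DPS01) to produce a decreasing family $\{h_\varepsilon\}$ with fixed multiplier ideal $\mathscr{I}(h_\varepsilon) = \mathscr{I}(h)$, analytic singularities along $Z$, and curvature $\geq -\varepsilon\omega$; the openness conjecture plus the boundedness of $|s|_{h^m}$ confine the Lelong set of every $h_\varepsilon$ to $Z$. On $W := X\setminus Z$ I would build a complete K\"ahler metric $\tilde\omega$ dominating a multiple of $\omega$ via the standard Demailly trick, adding $\frac{1}{l}i\partial\bar\partial \log^{-1}(-\psi)$ for a quasi-psh $\psi$ with logarithmic poles along $Z$. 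This gives two features: the $\bar\partial$-Hodge decomposition of $L^{n,\bullet}_{(2)}(W,L)_{h_\varepsilon,\tilde\omega}$ is valid, and restriction from $(X,\omega,h)$ to $(W,\tilde\omega,h_\varepsilon)$ is $L^2$-continuous.

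Given $[u] \in H^q(X, K_X \otimes L \otimes \mathscr{I}(h))$ represented by a $\bar\partial$-closed $u \in L^{n,q}_{(2)}(X,L)_{h,\omega}$ via de Rham--Weil, I would let $u_\varepsilon$ be the harmonic representative of $[u|_W]$ in the decomposition and form $\ast_{\tilde\omega} u_\varepsilon$. Since the Hodge star preserves $L^2$-norms, the uniform bound $\|\ast_{\tilde\omega} u_\varepsilon\|_{h_\varepsilon,\tilde\omega} \leq C\|u\|_{h,\omega}$ yields a weak $L^2$-limit $v$ along a subsequence; pairing with test forms shows $\bar\partial v = 0$, so $v$ is holomorphic on $W$. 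To descend to a global section I would invoke an Ohsawa-type $L^2$-extension theorem: because $|s|_{h^m}$ is bounded above, the weight on $W$ is bounded above, so $v$ extends to an element of $H^0(X, \Omega^{n-q}_X \otimes L)$, and I set $S^q([u]) := v$.

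For injectivity, suppose $v \equiv 0$. Writing $u|_W = \bar\partial w_\varepsilon + u_\varepsilon$ and exploiting $h_{\varepsilon_0} \geq h_\varepsilon$ for $\varepsilon < \varepsilon_0$, the uniform $L^2$-bound of $u_\varepsilon$ in the $h_{\varepsilon_0}$-norm extracts a weak limit that the vanishing of $\ast_{\tilde\omega} u_\varepsilon$ forces to be zero; hence $u|_W$ is orthogonal to $\mathcal{H}^{n,q}_{h_{\varepsilon_0},\tilde\omega}(L) \oplus \mathrm{Im}\,\bar\partial^{\ast_{\tilde\omega}}_{h_{\varepsilon_0}}$, i.e.\ $\bar\partial$-exact in $L^{n,q}_{(2)}(W,L)_{h_{\varepsilon_0},\tilde\omega}$. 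I would close by a commutative square comparing the de Rham--Weil isomorphisms on $(X,\omega,h)$ and on $(W,\tilde\omega,h_{\varepsilon_0})$ with their \v{C}ech counterparts; the equality $\mathscr{I}(h) = \mathscr{I}(h_{\varepsilon_0})$ identifies the two \v{C}ech groups, so $[u] = 0$ globally. The step I expect to be hardest is the extension of $v$ across $Z$: one must verify that the weak limit living a priori only on $W$ is a \emph{holomorphic} section of $\Omega^{n-q}_X \otimes L$ on all of $X$, and the boundedness of $|s|_{h^m}$ is exactly what keeps the weights under control so that the Ohsawa-type extension applies.
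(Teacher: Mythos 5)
Your proposal reproduces the paper's own proof essentially step for step: the same Demailly regularization with singularities confined to $Z=\{s=0\}$ via Berndtsson's openness theorem, the same complete metric $\tilde{\omega}$ on $W=X-Z$, the same definition of $S^{q}$ by harmonic representatives, Hodge star, weak $L^{2}$-limit and Ohsawa-type extension across $Z$, and the same injectivity argument via orthogonality to $\mathcal{H}^{n,q}_{h_{\varepsilon_{0}},\tilde{\omega}}(L)\bigoplus\mathrm{Im}\bar{\partial}^{\ast_{\tilde{\omega}}}_{h_{\varepsilon_{0}}}$ followed by the \v{C}ech comparison using $\mathscr{I}(h)=\mathscr{I}(h_{\varepsilon_{0}})$. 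There is no substantive difference in route, so nothing further to compare.
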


Now the embedding theorem we need is a direct consequence of Proposition \ref{p1}.
\begin{corollary}\label{c1}
Let $(L,h)$ be a pseudo-effective line bundle on a complex manifold $X$. Assume that there exists a section $s$ of some positive multiple $mL$ satisfying $\sup_{X}|s|_{h^{m}}<\infty$. Let $Y$ be a complex manifold, and $f: X\rightarrow Y$ a smooth, K\"{a}hler fibration with connected compact fibres. If for any $y\in Y$, $h_{y}:=h|_{X_{y}}$ is well-defined and
\begin{equation*}
    H^{q}(X_{y},K_{X_{y}}\otimes L_{y})=H^{q}(X_{y},(K_{X_{y}}\otimes L_{y})\otimes\mathscr{I}(h_{y})),
\end{equation*}
then there exists a natural injective morphism
\begin{equation*}
   S^{q}:R^{q}f_{\ast}(K_{X/Y}\otimes L)\rightarrow f_{\ast}(\Omega^{n-q}_{X/Y}\otimes L).
\end{equation*}
\begin{proof}
For any $y\in Y$, replacing $X$ in Proposition \ref{p1} by $X_{y}$, we have the morphism
\begin{equation}\label{e8}
    S^{q}_{y}:H^{q}(X_{y},K_{X_{y}}\otimes L_{y})\rightarrow H^{0}(X_{y},\Omega^{n-q}_{X_{y}}\otimes L_{y}).
\end{equation}
It naturally induces an injective morphism from $R^{q}f_{\ast}(K_{X/Y}\otimes L)_{y}$ to $f_{\ast}(\Omega^{n-q}_{X/Y}\otimes L)_{y}$, which is still denoted by $S^{q}_{y}$. Then we get the desired morphism $S^{q}$ as $y$ varies.
\end{proof}
\end{corollary}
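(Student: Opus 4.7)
The plan is to apply Proposition \ref{p1} fiberwise and then glue the resulting maps into a morphism of sheaves on $Y$.

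First, I would check that each fiber $X_y$ satisfies the hypotheses of Proposition \ref{p1}. Compactness follows from $f$ being a smooth fibration with compact fibers; $L_y$ is pseudo-effective with well-defined metric $h_y$; and the restriction $s|_{X_y}$ is a section of $m L_y$ whose $h_y^m$-norm is bounded by $\sup_X|s|_{h^m}<\infty$. Consequently Proposition \ref{p1} produces an injective map $S^q_y$ as in (\ref{e8}), and by the cohomological hypothesis its source may be identified with $H^q(X_y,K_{X_y}\otimes L_y)$.

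Next, because $f$ is smooth and proper and the sheaves $K_{X/Y}\otimes L$ and $\Omega^{n-q}_{X/Y}\otimes L$ are locally free, standard base-change results identify the fiber of $R^q f_\ast(K_{X/Y}\otimes L)$ at $y$ with $H^q(X_y,K_{X_y}\otimes L_y)$, and the fiber of $f_\ast(\Omega^{n-q}_{X/Y}\otimes L)$ at $y$ with $H^0(X_y,\Omega^{n-q}_{X_y}\otimes L_y)$. Under these identifications, $S^q_y$ becomes a map between the fibers of two coherent $\mathcal{O}_Y$-modules.

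The main obstacle is to assemble the pointwise family $\{S^q_y\}_{y\in Y}$ into an $\mathcal{O}_Y$-linear morphism of sheaves; that is, one must verify that the approximation scheme of Proposition \ref{p1} (Demailly's family $\{h_\varepsilon\}$, harmonic projection in $\mathcal{H}^{n,q}_{h_\varepsilon,\tilde{\omega}}$, and weak $L^2$-limit) is natural in $y$. I would work locally over a relatively compact open $V\subset Y$: the metric $h$ and its approximants $h_\varepsilon$ restrict to each fiber of $f^{-1}(V)\to V$, and a local section of $R^q f_\ast(K_{X/Y}\otimes L)$ over $V$ is represented (via the relative de Rham--Weil / \v{C}ech machinery used in the proof of Proposition \ref{p1}) by a relative $\bar\partial$-closed $(n,q)$-form whose restriction to each fiber has uniformly bounded $L^2$-norm. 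Applying $S^q_{y'}$ fiberwise produces a holomorphic section of $\Omega^{n-q}_{X_{y'}}\otimes L_{y'}$ with uniformly bounded $L^2$-norm in $y'\in V$, and these fiberwise sections patch to a holomorphic section of $\Omega^{n-q}_{X/Y}\otimes L$ on $f^{-1}(V)$, i.e., a section of $f_\ast(\Omega^{n-q}_{X/Y}\otimes L)$ over $V$. This is what defines $S^q$.

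Finally, injectivity of $S^q$ is inherited from the fiberwise injectivity of $S^q_y$: the kernel of $S^q$ is a coherent subsheaf of $R^q f_\ast(K_{X/Y}\otimes L)$ whose fiber at every $y\in Y$ vanishes, so by Nakayama's lemma the kernel is zero.
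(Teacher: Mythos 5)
Your proposal takes essentially the same route as the paper: the paper's proof likewise applies Proposition \ref{p1} fiberwise to obtain the maps $S^{q}_{y}$ of (\ref{e8}) and then simply declares that these induce the sheaf morphism ``as $y$ varies.'' Your extra scaffolding---verifying the fiberwise hypotheses, the base-change identifications, the sketch of holomorphic dependence on $y$, and the Nakayama argument for injectivity---only fills in details the paper leaves implicit, so the two arguments agree in substance.
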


Moreover, it can be proved that this injective morphism is actually split.
\begin{proposition}\label{p3}
With the same assumptions as in Corollary \ref{c1}, there is a surjective morphism
\begin{equation*}
    L^{q}:f_{\ast}(\Omega^{n-q}_{X/Y}\otimes L)\rightarrow R^{q}f_{\ast}(K_{X/Y}\otimes L)
\end{equation*}
such that $L^{q}\circ S^{q}=id$, hence $S^{q}$ is split.
\begin{proof}
We first define at each point $y\in Y$ the morphism
\begin{equation*}
 \begin{split}
     L^{q}_{y}:H^{0}(X_{y},\Omega^{n-q}_{X_{y}}\otimes L_{y})&\rightarrow H^{q}(X_{y},K_{X_{y}}\otimes L_{y})
 \end{split}
\end{equation*}
by $L^{q}_{y}(v)=[\omega^{q}_{y}\wedge v]$. Remember $\omega_{f}$ is the $(1,1)$-form on $X$ given by the K\"{a}hler fibration $f$ and $\omega_{y}=\omega_{f}|_{X_{y}}$. We need to prove that for any $[u]\in H^{q}(X_{y},(K_{X_{y}}\otimes L_{y}))$, $L^{q}_{y}\circ S^{q}_{y}([u])=[u]$ with the morphism $S^{q}_{y}$ in (\ref{e8}).

When replacing $X$ in Proposition \ref{p1} by $X_{y}$, we have the notations such as $W_{y}$ and $\tilde{\omega}_{y}$ which correspond to $W$ and $\tilde{\omega}$ in Proposition \ref{p1} respectively. Denote $S^{q}_{y}([u])$ by $v$ and take $\tilde{\omega}^{q}_{y}\wedge v\in[(\omega^{q}_{y}\wedge v)|_{W_{y}}]$. Then
\begin{equation*}
    \tilde{\omega}^{q}_{y}\wedge v=\tilde{\omega}^{q}_{y}\wedge\lim_{\varepsilon\rightarrow0}\ast u_{\varepsilon}=\lim_{\varepsilon\rightarrow0}(\tilde{\omega}^{q}_{y}\wedge\ast u_{\varepsilon})=\lim_{\varepsilon\rightarrow0} u_{\varepsilon}.
\end{equation*}
Hence it is clear that $S^{q}_{y}\circ L^{q}_{y}(v)=v$ by tracing the definition of $S^{q}_{y}$. So $L^{q}_{y}(v)=[u]$ by the injectivity of $S^{q}_{y}$. Now all of the morphisms $L^{q}_{y}$ as $y$ varies naturally induce the desired morphism
\begin{equation*}
   L^{q}:f_{\ast}(\Omega^{n-q}_{X/Y}\otimes L)\rightarrow R^{q}f_{\ast}(K_{X/Y}\otimes L).
\end{equation*}
\end{proof}
\end{proposition}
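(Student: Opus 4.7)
The plan is to build $L^q$ as the fibrewise wedge with the $q$-th power of the relative K\"ahler form, and then show that $L^q_y\circ S^q_y=\mathrm{id}$ at every point $y\in Y$ using the pointwise identity $\omega^q\wedge\ast\alpha=\alpha$ (up to a harmless normalization) for any $(n,q)$-form $\alpha$ on a K\"ahler $n$-fold. Surjectivity of $L^q$ will then be automatic from this splitting relation, and the construction globalizes smoothly in $y$.

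First I would define the map itself. Because $f$ is K\"ahler, the form $\omega_f$ on $X$ is globally defined and restricts to a K\"ahler form $\omega_y$ on each fibre. Wedge with $\omega_f^q$ is a morphism of sheaves $\Omega^{n-q}_{X/Y}\otimes L\to K_{X/Y}\otimes L$; pushing this forward and composing with the canonical edge map $f_\ast(K_{X/Y}\otimes L)\to R^q f_\ast(K_{X/Y}\otimes L)$ produces a sheaf morphism
\begin{equation*}
L^q : f_\ast(\Omega^{n-q}_{X/Y}\otimes L)\to R^q f_\ast(K_{X/Y}\otimes L),
\end{equation*}
whose fibre over $y$ is exactly $L^q_y(v)=[\omega_y^q\wedge v]$.

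Second I would verify the splitting identity fibrewise. Fixing $[u]\in H^q(X_y,K_{X_y}\otimes L_y)$, I write $v:=S^q_y([u])$ via the construction of Proposition \ref{p1} applied to $X_y$: this produces harmonic representatives $u_\varepsilon\in\mathcal{H}^{n,q}_{h_\varepsilon,\tilde\omega_y}(L_y)$ of the class of $u|_{W_y}$ together with $v=\lim_{\varepsilon\to 0}\ast_{\tilde\omega_y} u_\varepsilon$ in the weak $L^2$-sense on $W_y$. Applying the pointwise Hodge identity inside the limit gives
\begin{equation*}
\tilde\omega_y^q\wedge v=\lim_{\varepsilon\to 0}\tilde\omega_y^q\wedge\ast_{\tilde\omega_y} u_\varepsilon=\lim_{\varepsilon\to 0} u_\varepsilon,
\end{equation*}
so $\omega_y^q\wedge v$ and the limit of the $u_\varepsilon$ represent the same cohomology class on $W_y$. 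Transporting through the de Rham--Weil and \v{C}ech isomorphisms used in Proposition \ref{p1}, together with the equality $\mathscr{I}(h_{\varepsilon,y})=\mathscr{I}(h_y)$, I conclude $L^q_y(v)=[\omega_y^q\wedge v]=[u]$, i.e.\ $L^q_y\circ S^q_y=\mathrm{id}$.

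The main delicate point is that $v$ is only constructed on $W_y$ as a weak $L^2$-limit and is then extended to $X_y$ by the Ohsawa-type extension result from Proposition \ref{p1}; one has to confirm that the class $[\omega_y^q\wedge v]$ in $H^q(X_y,K_{X_y}\otimes L_y)$ genuinely agrees with $\lim_\varepsilon u_\varepsilon$ modulo $\mathrm{Im}\,\bar\partial$ under the same commutative diagram, so that wedge, weak limit, and compactification commute at the cohomological level. Once this is in place, the fibrewise identity $L^q_y\circ S^q_y=\mathrm{id}$ extends naturally in $y$ to the global identity $L^q\circ S^q=\mathrm{id}$, which forces $L^q$ to be surjective and exhibits $S^q$ as a split injection.
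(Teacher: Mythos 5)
Your proposal follows essentially the same route as the paper: the fibrewise definition $L^{q}_{y}(v)=[\omega^{q}_{y}\wedge v]$, the key identity $\tilde{\omega}^{q}_{y}\wedge\ast_{\tilde{\omega}_{y}}u_{\varepsilon}=u_{\varepsilon}$ passed through the weak $L^{2}$-limit, and globalization as $y$ varies. The one place you diverge is the final deduction, and it is exactly the step you flag as delicate but do not close: you try to identify $[\omega^{q}_{y}\wedge v]$ with $[u]$ directly, which requires knowing that the weak limit $\lim_{\varepsilon\rightarrow0}u_{\varepsilon}$ still represents the class of $u|_{W_{y}}$ modulo $\mathrm{Im}\bar{\partial}$. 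This is fillable with tools already in the paper (for $\varepsilon<\varepsilon_{0}$ all $u_{\varepsilon}$ lie in the fixed space $L^{n,q}_{(2)}(W_{y},L)_{h_{\varepsilon_{0}},\tilde{\omega}_{y}}$ with uniformly bounded norms, $u_{W}-u_{\varepsilon}=\bar{\partial}w_{\varepsilon}$, and $\mathrm{Im}\bar{\partial}$ is closed by the isomorphism (\ref{e2}), hence weakly closed), but the paper avoids it altogether by a cleaner trick: from $\tilde{\omega}^{q}_{y}\wedge v=\lim_{\varepsilon\rightarrow0}u_{\varepsilon}$ it reads off $S^{q}_{y}(L^{q}_{y}(v))=v=S^{q}_{y}([u])$ and concludes $L^{q}_{y}(v)=[u]$ from the injectivity of $S^{q}_{y}$ already established in Proposition \ref{p1}, so no identification of weak limits with cohomology classes is ever needed. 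Separately, your sheaf-level description of the globalization is misstated: wedging with the $(q,q)$-form $\omega_{f}^{q}$ is not a morphism $\Omega^{n-q}_{X/Y}\otimes L\rightarrow K_{X/Y}\otimes L$ (it raises the anti-holomorphic degree), and there is no canonical edge map $f_{\ast}(K_{X/Y}\otimes L)\rightarrow R^{q}f_{\ast}(K_{X/Y}\otimes L)$; the correct globalization is the fibrewise one you and the paper both actually use, namely that the maps $L^{q}_{y}$ (equivalently, cup product with the Dolbeault class of $\omega^{q}_{y}$) vary naturally with $y$. Neither issue changes the substance: your argument is the paper's, with one left-open step that the paper's injectivity device renders unnecessary.
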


\begin{remark}
This proposition also tells us that the definition of $S^{q}_{y}$ is actually canonical, i.e. it does not depend on the choice of $\{h_{\varepsilon}\}$ and $\tilde{\omega}$.
\end{remark}

By the preparation above, we can prove Theorem \ref{t4} by using a result in \cite{Har80}.
\begin{proof}[Proof of Theorem \ref{t4}]
Set $\mathcal{Q}^{q}_{y}:=\mathrm{Ker} L^{q}_{y}$. Since $S^{q}_{y}$ as well as $L^{q}_{y}$ is split, we have the short exact sequence
\begin{equation*}
   0\rightarrow R^{q}f_{\ast}(K_{X/Y}\otimes L)_{y}\rightarrow f_{\ast}(\Omega^{n-q}_{X/Y}\otimes L)_{y}\rightarrow\mathcal{Q}^{q}_{y}\rightarrow0.
\end{equation*}
By Corollary 1.7 in \cite{Har80}, $f_{\ast}(\Omega^{n-q}_{X/Y}\otimes L)_{y}$ is reflexive, and hence $\mathcal{Q}^{q}_{y}$ is torsion-free. Therefore $R^{q}f_{\ast}(K_{X/Y}\otimes L)_{y}$ is normal and so reflexive.
\end{proof}

\section{Vanishing Theorem}
As is stated in Theorem \ref{t4}, $R^{q}f_{\ast}(K_{X/Y}\otimes L)$ is reflexive if the singularity of $L$ is tame enough.
In this section, we consider the positivity of the higher direct images. We first prove Theorem \ref{t5} by using Theorems \ref{t3} and \ref{t4}.

There are some issues we should pay attention to before we start the proof. Firstly, in general if $f$ is a surjective morphism, it is generic smooth, namely it is smooth on a Zariski open subset $W\subset X$. Secondly, if $\mathscr{I}(h)=\mathcal{O}_{X}$, it is only assured that $\mathscr{I}(h_{y})=\mathcal{O}_{X_{y}}$ for all $y\in Z:=\{y\in Y|~ h_{y}:=h|_{X_{y}}\textrm{ is well-defined}\}$. Therefore the conditions in Theorem \ref{t4} are not fully satisfied under the assumptions of Theorem \ref{t5}. Fortunately, this is enough for our purposes.

\begin{proof}[Proof of Theorem \ref{t5}]
By asymptotic Serre vanishing theorem, we can choose a positive integer $m_{0}$ such that for all $m\geqslant m_{0}$,
\begin{equation*}
    H^{i}(Y,R^{q}f_{\ast}(K_{X}\otimes L)\otimes\mathcal{O}_{Y}(mA))=0
\end{equation*}
for $i>0,\, q\geqslant0$.
Fix an integer $m$ such that $m\geqslant m_{0}$ and  $\mathcal{O}_{Y}(mA)$ is very ample.

We prove the theorem by induction on $n=\dim X$, the case $n=0$ being trivial.
Denote $A'=f^\ast (A)$ and let $H^{\prime}\in|mA^{\prime}|$ be the pullback of a general divisor $H\in|mA|$.
It follows from Bertini's theorem that we can assume $H$ is integral and $H^{\prime}$ is smooth (though possibly disconnected).
Moreover, we can let  $H$ be disjoint with $Y-Z\cap W$. Then we have a short exact sequence
\begin{equation}\label{e3}
\begin{split}
0&\rightarrow K_{X}\otimes L\otimes\mathcal{O}_{X}(A^{\prime})\rightarrow K_{X}\otimes L\otimes\mathcal{O}_{X}((m+1)A^{\prime})\\
&\rightarrow K_{H^{\prime}}\otimes L\otimes\mathcal{O}_{X}(A^{\prime})|_{H^{\prime}}\rightarrow0
\end{split}
\end{equation}
which is induced by multiplication with a section defining $H^{\prime}$. We get from this short exact sequence a long exact sequence
\begin{equation}\label{01}
\begin{split}
0&\rightarrow f_{\ast}(K_{X}\otimes L\otimes\mathcal{O}_{X}(A^{\prime}))\rightarrow f_{\ast}(K_{X}\otimes L\otimes\mathcal{O}_{X}((m+1)A^{\prime}))\\
&\rightarrow f_{\ast}(K_{H^{\prime}}\otimes L\otimes\mathcal{O}_{X}(A^{\prime})|_{H^{\prime}})\rightarrow R^{1}f_{\ast}(K_{X}\otimes L\otimes\mathcal{O}_{X}(A^{\prime}))\\
&\rightarrow R^{1}f_{\ast}(K_{X}\otimes L\otimes\mathcal{O}_{X}((m+1)A^{\prime}))\rightarrow\cdots
\end{split}
\end{equation}
By Theorem \ref{t4} all the higher direct images of $K_{X}\otimes L\otimes\mathcal{O}_{X}(A^{\prime})$ are torsion-free on $Z\cap W$.
Also clearly the sheaves $R^{q}f_{\ast}(K_{H^{\prime}}\otimes L\otimes\mathcal{O}_{X}(A^{\prime})|_{H^{\prime}})$ are torsion on $H$.
Hence the long exact sequence (\ref{01}) can be split into a family of short exact sequences:  for all $q\geq 0$,
\begin{equation}\label{e4}
\begin{split}
0&\rightarrow R^{q}f_{\ast}(K_{X}\otimes L\otimes\mathcal{O}_{X}(A^{\prime}))\rightarrow R^{q}f_{\ast}(K_{X}\otimes L\otimes\mathcal{O}_{X}((m+1)A^{\prime}))\\
&\rightarrow R^{q}f_{\ast}(K_{H^{\prime}}\otimes L\otimes\mathcal{O}_{X}(A^{\prime})|_{H^{\prime}})\rightarrow0.
\end{split}
\end{equation}
On the other hand, applying the inductive hypothesis to each connected component of $H^{\prime}$,
we have that for all $i\geqslant1$
\begin{equation*}
   H^{i}(Y,R^{q}f_{\ast}(K_{H^{\prime}}\otimes L\otimes\mathcal{O}_{X}(A^{\prime})|_{H^{\prime}}))=0.
\end{equation*}
Furthermore, by the choice of $m$ we also have for all $i\geqslant1$
\begin{equation}\label{02}
    H^{i}(Y,R^{q}f_{\ast}(K_{X}\otimes L\otimes\mathcal{O}_{X}((m+1)A^{\prime})))=0.
\end{equation}
Now by taking the cohomology long exact sequence from the short exact sequence (\ref{e4}), we find for every $i>1$
\begin{equation*}
   H^{i}(Y,R^{q}f_{\ast}(K_{X}\otimes L\otimes\mathcal{O}_{X}(A^{\prime})))=0.
\end{equation*}
This proves the theorem for the cases where $i>1$.

To prove the case where $i=1$, we denote
\begin{equation*}
   B_{l}:=H^{1}(Y,R^{q}f_{\ast}(K_{X}\otimes L\otimes\mathcal{O}_{X}(lA^{\prime}))).
\end{equation*}
By identity (\ref{02}) for $i=1$, we have $B_{m+1}=0$. Hence we consider the following commutative diagram:
\begin{equation*}
\xymatrix{
    B_{1}\ar[d] & \stackrel{\phi}{\longrightarrow} & H^{q+1}(X,K_{X}\otimes L \otimes\mathcal{O}_{X}(A^{\prime}))\ar[d]^{\psi} \\
    B_{m+1} & {\longrightarrow} &H^{q+1}(X,K_{X}\otimes L\otimes\mathcal{O}_{X}((m+1)A^{\prime}))
    }
\end{equation*}
Here the horizontal maps are the canonical injective maps coming out of the Leray spectral sequence, and the vertical maps are induced by multiplication with sections defining $H^{\prime}$ and $H$ respectively.
By Theorem \ref{t3} the map $\psi$ is injective, and hence the composition $\psi\circ\phi$ is also injective.
So $B_1=0$ and we finish the proof of the theorem for the case where $i=1$.
\end{proof}

Using Theorem \ref{t5}, we can prove the positivity of the higher direct images. We first review the definition and a basic result of the Castelnuovo--Mumford regularity \cite{Mum66}.

\begin{definition}
Let $X$ be a projective manifold and $L$  an ample and globally generated line bundle on $X$.
Given an integer $m$, a coherent sheaf $F$ on $X$ is $m$-regular with respect to $L$ if for all $i\geqslant1$
\begin{equation*}
H^{i}(X,F\otimes L^{m-i})=0.
\end{equation*}
\end{definition}

\begin{theorem}(Mumford, \cite{Mum66})\label{t7}
Let $X$ be a projective manifold and $L$ an ample and globally generated line bundle on $X$.
If $F$ is a coherent sheaf on $X$ that is $m$-regular with respect to $L$, then the sheaf $F\otimes L^{m}$ is globally generated.
\end{theorem}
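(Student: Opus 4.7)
The plan is to reduce to the case $m = 0$: observe that $F$ is $m$-regular with respect to $L$ if and only if $G := F \otimes L^m$ is $0$-regular, so it suffices to show that every $0$-regular coherent sheaf on $X$ is globally generated. I would then induct on $\dim X$, the base case $\dim X = 0$ being trivial since there is no higher cohomology.

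For the inductive step, I would pick a general member $H \in |L|$; Bertini's theorem guarantees $H$ is smooth because $L$ is globally generated. From the short exact sequence
\begin{equation*}
0 \to G \otimes L^{-1} \to G \to G|_{H} \to 0,
\end{equation*}
twisted by $L^{-i}$ for each $i \geq 1$, the associated long exact sequence sandwiches $H^i(H, G|_H \otimes L^{-i}|_H)$ between $H^i(X, G \otimes L^{-i})$ and $H^{i+1}(X, G \otimes L^{-i-1})$, both of which vanish by $0$-regularity of $G$ on $X$. Hence $G|_H$ is itself $0$-regular on $H$, and the inductive hypothesis yields that $G|_H$ is globally generated on $H$.

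Next, the vanishing $H^1(X, G \otimes L^{-1}) = 0$ from $0$-regularity gives surjectivity of the restriction $H^0(X, G) \to H^0(H, G|_H)$, so global sections of $G|_H$ lift to sections of $G$ on $X$. For a point $p \in H$, Nakayama's lemma, applied using that $H$ is cut out locally at $p$ by a section of $L$, shows that lifts of generators of $(G|_H)_p$ generate $G_p$. Since $L$ is globally generated, every point of $X$ lies on some member of $|L|$, and Bertini applied through the finite morphism $\phi_L : X \to \mathbb{P}(H^0(L)^\ast)$ allows $H$ to be chosen smooth at the prescribed point, so that $G$ is globally generated at every point of $X$.

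The principal obstacle is precisely this last coupling: arranging simultaneously that $H$ is smooth, passes through the chosen point, and still carries the full inductive hypothesis. A cleaner alternative I would keep in reserve is to prove by parallel induction two auxiliary statements, namely that $0$-regularity of $G$ implies $k$-regularity for every $k \geq 0$ and that the multiplication map $H^0(X, G) \otimes H^0(X, L) \to H^0(X, G \otimes L)$ is surjective; one then deduces global generation of $G$ by invoking Serre's vanishing to obtain global generation of $G \otimes L^N$ for $N \gg 0$, and descending via iterated use of the multiplication surjectivity together with the global-generation surjection $H^0(L) \otimes \mathcal{O}_X \twoheadrightarrow L$.
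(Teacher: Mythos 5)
The paper does not actually prove this statement; it quotes it from Mumford's lectures \cite{Mum66} and uses it as a black box in the proof of Theorem \ref{t6}, so your proposal can only be measured against the classical argument. Your main line of attack has a genuine gap, which to your credit you identify at the end: to generate $G$ at a prescribed point $p$ you need a divisor $H\in|L|$ through $p$ for which the sequence $0\to G\otimes L^{-1}\to G\to G|_{H}\to 0$ is exact, and this requires $H$ to avoid every associated point of the coherent sheaf $G$ (which is merely coherent, not locally free, and may carry torsion or embedded components). A general member of $|L|$ avoids the finitely many associated points, but a member constrained to pass through $p$ need not: if $p$ is itself an associated point of $G$, every $H$ through $p$ destroys the injectivity of $G\otimes L^{-1}\to G$, and no application of Bertini through $\phi_{L}$ can repair this, because the obstruction concerns the sheaf, not the smoothness of $H$. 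Smoothness of $H$ at $p$ is likewise unobtainable in general, since $p$ may be a base point of the subsystem of divisors in $|L|$ passing through $p$. So the pointwise lifting-plus-Nakayama strategy fails as stated.

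Your reserve plan, however, is essentially Mumford's actual proof, and it closes the gap completely: prove by induction on dimension, using a \emph{general} $H\in|L|$ chosen only to avoid the associated points of $G$, that (i) a $0$-regular sheaf is $k$-regular for all $k\geqslant0$, and (ii) the multiplication map $H^{0}(X,G)\otimes H^{0}(X,L)\to H^{0}(X,G\otimes L)$ is surjective. Serre vanishing then gives global generation of $G\otimes L^{N}$ for $N\gg0$; iterating (ii) shows that $H^{0}(X,G)\otimes H^{0}(X,L^{N})\otimes\mathcal{O}_{X}\to G\otimes L^{N}$ is surjective, and since this map factors through $(\mathrm{im}\,\mathrm{ev}_{G})\otimes L^{N}$, where $\mathrm{ev}_{G}:H^{0}(X,G)\otimes\mathcal{O}_{X}\to G$ is the evaluation map, exactness of twisting by the line bundle $L^{N}$ forces $\mathrm{im}\,\mathrm{ev}_{G}=G$. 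You should promote this from a fallback to the main argument: in it $H$ never needs to pass through a prescribed point and never needs to be smooth (only to avoid the associated points of $G$), so the coupling you worried about disappears entirely, and the proof then also works for coherent sheaves on arbitrary projective schemes, which is the natural generality of the statement.
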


After this, we can  prove Theorem \ref{t6} as a corollary of Theorem \ref{t5}.
\begin{proof}[Proof of Theorem \ref{t6}]
It follows from Theorem \ref{t5} that for every $i\geqslant1$
\begin{equation*}
  H^{i}(Y,R^{q}f_{\ast}(K_{X}\otimes L)\otimes A^{m-i}\otimes A^{\prime})=0.
\end{equation*}
Hence the sheaf $R^{q}f_{\ast}(K_{X}\otimes L)\otimes A^{m}\otimes A^{\prime}$ is $0$-regular with respect to $A$.
So it is globally generated by Theorem \ref{t7}.
\end{proof}

\address{

\small Current address: School of Mathematical Sciences, Fudan University, Shanghai 200092, People¡¯s
Republic of China

\small E-mail address: jingcaowu13@fudan.edu.cn
}

\end{document}